\newtheorem{theorem}{Theorem} [section]
\newtheorem{lemma}[theorem]{Lemma}
\newtheorem{proposition}[theorem]{Proposition}
\newtheorem{remark}[theorem]{Remark}
\newtheorem{definition}[theorem]{Definition}
\newtheorem{corollary}[theorem]{Corollary}
\renewcommand{\1}{\hspace{0.5mm}\text{I}\hspace{0.5mm}}
\renewcommand{\2}{\text{I \hspace{-2.8mm} I} }
\newcommand{\noi}{\noindent}
\newcommand{\Z}{\mathbb{Z}}
\newcommand{\R}{\mathbb{R}}
\newcommand{\C}{\mathbb{C}}
\newcommand{\T}{\mathbb{T}}
\let\Re=\undefined\DeclareMathOperator*{\Re}{Re}
\let\Im=\undefined\DeclareMathOperator*{\Im}{Im}
\let\P= \undefined
\newcommand{\P}{\mathbf{P}}
\newcommand{\Q}{\mathcal{Q}}
\newcommand{\N}{\mathcal{N}}
\newcommand{\I}{\mathcal{I}}
\newcommand{\RR}{\mathcal{R}}
\newcommand{\F}{\mathcal{F}}
\newcommand{\al}{\alpha}
\newcommand{\dl}{\delta}
\newcommand{\eps}{\varepsilon}
\newcommand{\g}{\gamma}
\newcommand{\G}{\Gamma}
\newcommand{\ld}{\lambda}
\newcommand{\s}{\sigma}
\newcommand{\ft}{\widehat}
\newcommand{\wt}{\widetilde}
\newcommand{\dx}{\partial_x}
\newcommand{\dt}{\partial_t}
\renewcommand{\l}{\ell}
\newcommand{\les}{\lesssim}
\renewcommand{\S}{\mathcal{S}}
\newcommand{\B}{\mathcal{B}}
\newtheorem*{ackno}{Acknowledgements}
\newcommand{\bk}{{\bf k}}
\newcommand{\J}{\mathcal{J}}
\numberwithin{equation}{section}
\numberwithin{theorem}{section}
\newcommand{\GG}{\mathcal{G}}
\newcommand{\Ind}{\mathrm{Ind}}
\begin{document}
\baselineskip = 14pt

\title[quadratic derivative NLS on the circle]
{Normal form approach to global well-posedness of the quadratic derivative 
nonlinear Schr\"odinger equation on the circle}

\author[J.~Chung, Z.~Guo, S.~Kwon, and T.~Oh]{Jaywan Chung, Zihua Guo, Soonsik Kwon, and Tadahiro Oh}

\address{Jaywan Chung\\
National Institute for Mathematical Sciences\\
70, Yuseong-daero 1689 beon-gil, Yuseong-gu,  Daejeon, 34047, Korea
}

\email{jchung@nims.re.kr}

\address{Zihua Guo\\
School of Mathematical Sciences\\
Monash University\\
VIC 3800, Australia, 
and 
LMAM, School of Mathematical Sciences, Peking University, Beijing 100871, China} 

\email{zihua.guo@monash.edu}

\address{Soonsik Kwon\\
 Department of Mathematical Sciences, Korea Advanced Institute of Science and Technology, 
 291 Daehak-ro, Yuseong-gu,
 Daejeon, 34141, Korea}
\email{soonsikk@kaist.edu}

\address{
Tadahiro Oh, School of Mathematics\\
The University of Edinburgh\\
and The Maxwell Institute for the Mathematical Sciences\\
James Clerk Maxwell Building\\
The King's Buildings\\
Peter Guthrie Tait Road\\
Edinburgh\\ 
EH9 3FD\\
 United Kingdom}

\email{hiro.oh@ed.ac.uk}

\keywords{quadratic derivative nonlinear Schr\"odinger equation; normal form; Cole-Hopf transform;
 Fourier-Lebesgue space; well-posedness; finite time blowup}
\subjclass[2010]{35Q55}

\begin{abstract}
We consider  the quadratic derivative nonlinear Schr\"odinger equation (dNLS)
on the circle.
In particular, we develop an infinite iteration scheme of normal form reductions for dNLS.
By combining this normal form procedure with the Cole-Hopf transformation, 
we prove unconditional global well-posedness  in $L^2(\T)$, 
and more generally 
 in certain Fourier-Lebesgue spaces $\F L^{s, p}(\T)$,  under the 
mean-zero and smallness assumptions.
As a byproduct, we construct an infinite sequence of
quantities that are invariant under the dynamics.
We also show the necessity of the smallness assumption
by explicitly constructing a finite time blowup solution
with non-small mean-zero initial data.

\end{abstract}

%\date{\today}

\maketitle

\date{\today}

\smallskip

\section{Introduction}
\label{SEC:intro}

In this paper, we consider the Cauchy problem for the following 
quadratic derivative nonlinear Schr\"odinger equation (dNLS) posed on the circle  $\T = \R/(2\pi \Z)$: 
\begin{equation}\label{DNLS1}
 \begin{cases}
i \dt u + \dx^2 u  =  \ld u \dx u \\
u|_{t = 0} = \phi,
\end{cases}
\quad 
(t,x) \in \R \times \T,
\end{equation}

\noi
where $u(t, x)$ is a complex-valued function
and $\ld \in \C\setminus \{0\}$.
Our main goal is to study the global-in-time behavior of solutions to \eqref{DNLS1}.
By the transformation $u \mapsto \frac{\ld}{i} u$, we may assume  $\ld = i$ in \eqref{DNLS1}.
Therefore, in the remaining part of this paper, 
we consider the following Cauchy problem:
\begin{equation}
\begin{cases}
 \dt u = i \dx^2 u + u \dx u\\
u|_{t = 0} = \phi.
\end{cases}
\label{DNLS1a}
 \end{equation}

Let us first compare some basic properties
of \eqref{DNLS1a}
against   the usual NLS with a power-type nonlinearity:
\begin{align}
\label{NLS1}
\dt u  = i \dx^2 u  \pm i |u|^{p-1}u
\end{align}

\noi
and  the cubic derivative NLS (DNLS):
\begin{equation}\label{DNLS2}
   \dt u  = i  \dx^2 u  + \dx (|u|^2u).
\end{equation}

\noi
It is well known that \eqref{NLS1} and \eqref{DNLS2}
are invariant under modulations:
$u \mapsto e^{i \theta} u$, 
which leads to the conservation of mass (= the $L^2$-norm).
On the other hand, 
the equation  \eqref{DNLS1a} is not  invariant under 
such modulations.
Another difference from \eqref{NLS1} and \eqref{DNLS2}
is the lack of conservation laws.
While \eqref{NLS1} and \eqref{DNLS2}
enjoy the conservation of mass, momentum, and energy, 
the equation
\eqref{DNLS1a} does not possess 
any  `natural' conservation law, 
except
for the conservation of the spatial mean:
\begin{align}
\label{mean}
 \int_\T u(t,x)dx  =\int_\T \phi(x) dx. 
 \end{align}

The Cauchy  problems for 
the usual NLS  \eqref{NLS1} and the cubic DNLS \eqref{DNLS2}
have been studied extensively
by many mathematicians 
\cite{GV2, Tsutsumi,  CazW2, Bo2, 
Takaoka, Herr,
 HTT11,   W}
and they are known to be locally/globally well-posed 
in Sobolev spaces $H^s$ for some range of $s$.
Moreover, the solution map: $u(0) \in H^s \mapsto u(t) \in H^s$
is known to be uniformly continuous on bounded sets.
The situation for 
dNLS  \eqref{DNLS1a} 
is entirely different.
Indeed, Christ \cite{Christ} proved
that \eqref{DNLS1a} on $M = \T$ or $\R$ is ill-posed in $H^s(M)$
for any $s \in \R$ 
by exhibiting the following  norm inflation phenomenon;
given any $\eps > 0$, 
there exist a solution $u$ to \eqref{DNLS1a} on $M$
and $t_\eps  \in (0, \eps) $ such that 
\begin{align}
 \| u(0)\|_{H^s(M)} < \eps \qquad \text{ and } \qquad \| u(t_\eps)\|_{H^s(M)} > \eps^{-1}.
\label{NI}
 \end{align}

\noi
This in particular implies the failure
of continuity of the solution map (at the trivial function)
and hence the ill-posedness of \eqref{DNLS1a}.
See also 
\cite{TAO} (and \cite{KTz, Tzv} in the context of the related Benjamin-Ono equation)
for the failure of local uniform continuity
of the solution map on $\R$.

In view of the above ill-posedness result \cite{Christ}, 
we can not expect to have well-posedness for \eqref{DNLS2} in the usual Sobolev spaces.
In the non-periodic setting, however, 
there are several local well-posedness results
for \eqref{DNLS1a}, either in weighted Sobolev spaces
or by adding an extra condition on initial data.
Kenig-Ponce-Vega \cite{KPV}
and Chihara \cite{Chihara} established local well-posedness of  \eqref{DNLS1a} on $\R$
in weighted Sobolev spaces with sufficiently high regularity.
In \cite{Stef},  Stefanov 
proved local well-posedness of \eqref{DNLS1a}
in $H^1(\R)$ with the small `disturbance' condition:
 $\sup_{x\in\R} \big|\int_{-\infty}^x \phi(y)\,dy\big| <  \eps$
 for some $\eps > 0$. 
Note that these results make use of 
the strong dispersive effect on $\R$
such as  local smoothing estimates
that are not available on the circle. 
In fact, there seems to be no known well-posedness result of \eqref{DNLS1a}
on $\T$.

Before we state our main results, 
let us define 
the homogeneous Fourier-Lebesgue space $ {\F L}_0^{s, p}(\T)$
for {\it mean-zero} functions on $\T$
 via the norm:
\[
\| \phi \|_{\F L_0^{s, p}(\T)} = \big\| |k|^{s} \ft \phi(k) \big\|_{\l^p(\Z_0)},
\]

\noi
where  $\Z_0 := \Z\setminus \{0\}$.
Note that, if we assume that an initial condition $\phi$ has mean 0, 
then the corresponding solution $u(t)$ to \eqref{DNLS1a} defined 
on a time interval $I$ also has mean 0 for each $t \in I$
thanks to  the conservation of the spatial mean \eqref{mean}.

We now state our main results.

\begin{theorem}[small data global well-posedness
for mean-zero initial data]\label{THM:main}
Suppose that  $(s, p)$ satisfy  \textup{(i)} $s>\frac 12 -\frac{1}{p}, \  p> 2$ or 
\textup{(ii)} $s\geq 0, \ p=2$. 
Then, there exists  $ \dl_0 = \dl_0(s, p) >0$ such that
dNLS \eqref{DNLS1a} is globally well-posed
in $\F L_0^{s, p}(\T)
\cap \{ 
\|\phi \|_{\F L_0^{s,p}(\T)} \leq \dl_0\}$.
Moreover, the uniqueness holds unconditionally.
Namely, the uniqueness holds in the entire $C(\R; \F L_0^{s, p}(\T))$.

Given a solution $u$ to \eqref{DNLS1a} constructed above, 
 there exists an infinite sequence $\{\Q_k (t) \}_{k \in \Z_0}$
such that $\Q_k = \Q_k[u]: \R \to \C$ is 
 invariant under the dynamics of \eqref{DNLS1a}
 for each $k \in \Z_0$.

\end{theorem}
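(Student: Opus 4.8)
The plan is to read off the invariants $\{\Q_k\}_{k\in\Z_0}$ directly from the Cole--Hopf transform that underlies the construction of $u$. Formally, if $u$ is a mean-zero solution of \eqref{DNLS1a} and $w = \dx^{-1}u$ denotes its mean-zero antiderivative, then $\dt w = i\dx^2 w + \tfrac12(\dx w)^2$ up to a spatially constant term, so that
\[
\Phi \ := \ \exp\Big(-\tfrac i2\,\dx^{-1}u\Big)
\]
solves the \emph{linear} Schr\"odinger equation $\dt\Phi = i\dx^2\Phi$ modulo multiplication by a spatially constant, $t$-dependent function, while $u = 2i\,\dx\Phi/\Phi$. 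The solution $u$ produced in the first part of the theorem is, by construction, of exactly this shape: $u(t) = 2i\,\dx\Phi(t)/\Phi(t)$ with $\Phi(t) = e^{it\dx^2}\Phi_0$ and $\Phi_0 = \exp(-\tfrac i2\,\dx^{-1}\phi)$. Since $\Phi_0$ is close to the constant function $1$ in the Wiener algebra whenever $\|\phi\|_{\F L_0^{s,p}(\T)}\le\dl_0$ is small, and $|\ft\Phi(k,t)| = |\ft\Phi_0(k)|$ is independent of $t$, the function $\Phi(t)$ stays uniformly bounded away from $0$ for all $t\in\R$; this is exactly what makes the formula $u = 2i\,\dx\Phi/\Phi$, and the normalization below, meaningful, and it is where the smallness hypothesis enters.

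The first step is to make this correspondence rigorous at the regularity in \textup{(i)}--\textup{(ii)}. For such $(s,p)$ one has the embedding $\F L_0^{s+1,p}(\T)\embeds\F L^{0,1}(\T)$ into the Wiener algebra (H\"older's inequality in the frequency variable, using $s+1>1-\tfrac1p$), so that $\dx^{-1}u(t)\in\F L^{0,1}$, the function $\exp(-\tfrac i2\dx^{-1}u(t))$ lies in $\F L^{0,1}$ and depends continuously on $u(t)$ there, and the chain-rule computation converting \eqref{DNLS1a} into the linear equation for $\Phi$ (up to the spatially constant potential noted above) is legitimate for the (a priori merely $C_t$, rough-in-$x$) solution. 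These are precisely the ingredients already developed for the well-posedness part, so nothing new is required here.

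The second step is the conservation. Since $e^{it\dx^2}$ acts on the $k$th Fourier mode by multiplication by $e^{-itk^2}$, the genuine linear solution $\Phi(t) = e^{it\dx^2}\Phi_0$ has $\ft\Phi(k,t) = e^{-itk^2}\ft\Phi_0(k)$. The function $\exp(-\tfrac i2\dx^{-1}u(t))$, built from $u(t)$ alone, recovers $\Phi(t)$ only up to a spatially constant, $t$-dependent factor $\mu(t)$ (the ambiguity in the constant of integration, equivalently the value of $\log\Phi$; one computes $\dot\mu(t)/\mu(t) = \tfrac{i}{8\pi}\int_\T u(t)^2\,dx$, which is generally nonzero), and this factor is canceled by dividing by the zeroth Fourier mode, which is nonvanishing since it is close to $1$ by smallness. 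Hence, for every $k\in\Z_0$,
\[
\Q_k[u](t)\ :=\ e^{itk^2}\,\frac{\widehat{\exp(-\tfrac i2\dx^{-1}u(t))}(k)}{\,\widehat{\exp(-\tfrac i2\dx^{-1}u(t))}(0)\,}\ =\ \frac{\ft\Phi_0(k)}{\ft\Phi_0(0)}
\]
is independent of $t$. This produces the asserted infinite family $\{\Q_k\}_{k\in\Z_0}$ of dynamical invariants; in particular $|\Q_k(t)| = |\ft\Phi_0(k)|/|\ft\Phi_0(0)|$ is conserved for each $k$, so the whole Fourier profile of the suitably normalized Cole--Hopf transform is preserved.

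The conservation step itself is immediate once the transform is in place; the real work — and the only place where the smallness hypothesis is genuinely needed — is the first step, together with the nonvanishing of $\Phi(t)$ and of the zeroth mode above: without these, $\log\Phi(t)$, hence the very definition of the transform and of $\Q_k$, degenerates. Since all of this is already available from the proof of global well-posedness, the construction of $\{\Q_k\}_{k\in\Z_0}$ follows essentially as a corollary.
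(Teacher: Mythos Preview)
Your proposal addresses only the invariants part of the theorem; the global well-posedness and unconditional uniqueness are treated as already established. That is fine, but you should be explicit that the bulk of the theorem (existence, uniqueness, continuous dependence) is being imported wholesale.

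On the invariants, your approach is correct and is essentially the Cole--Hopf route the paper takes in Section~\ref{SEC:CH}; indeed, the paper's Remark~\ref{REM:linear} spells out exactly the identification you are using, namely that the normal form series $\sum_{n\ge1}\N_k^n(v)$ is the $k$th Fourier coefficient of $2k\cdot S(-t)\GG_0[u(t)]$ up to a spatially constant phase. The paper then \emph{defines} $\Q_k(t)$ by \eqref{NF4}, which carries the trajectory-dependent factor $e^{-\frac{i}{4}\int_0^t M(u)\,dt'}$; the authors stress that this makes $\Q_k$ not a conservation law in the usual sense, since it depends on $u|_{[0,t]}$. Your normalization by the zeroth Fourier mode of $\GG_0[u(t)]$ is a genuine variation: it kills the same trajectory phase without the time integral, so your $\Q_k[u](t)$ depends only on $t$ and $u(t)$. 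This is a cleaner formulation, and the nonvanishing of the zeroth mode you need follows from the smallness (via the a priori bound on $\|u(t)\|_{L^2}$, or equivalently from $W_0(t)=W_0(0)$ and the estimate \eqref{SGWP5}). Either way the two families of invariants differ only by the constant factor $2kW_0(0)$.

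One point that is slightly glossed over: the sentence ``the chain-rule computation \ldots\ is legitimate for the (a priori merely $C_t$, rough-in-$x$) solution'' is not quite right as stated. At $L^2$ regularity, $u\dx u$ does not make classical sense, so you cannot literally differentiate $\GG_0[u(t)]$ and invoke \eqref{DNLS1a}. The paper handles this by working first with the smooth approximants $u^{(j)}$ (for which the Cole--Hopf computation is honest), and then passing to the limit using the continuity of $\phi\mapsto\GG_0[\phi]$ in the Wiener algebra and the $L^2$-convergence of $u^{(j)}\to u$; see Corollary~\ref{COR:cons}. Your argument implicitly relies on the same approximation step, so you should say so rather than suggest the chain rule applies directly.
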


Note that we have $\F L_0^{s, p}(\T) \subset  L^2_0(\T): = \F L_0^{0, 2}(\T)$
for $(s, p)$ satisfying the condition in Theorem \ref{THM:main}.
Namely, $L^2_0(\T)$ is the largest space where Theorem \ref{THM:main} holds.
We point out that the quantity 
$\Q_k(t) = \Q_k[u](t) $ constructed in Theorem \ref{THM:main}
is not a conservation law in the usual sense.
Given a global solution $u$ constructed in Theorem \ref{THM:main}, 
we have  $\Q_k[u](t) = \Q_k[u](0)$ for all $t \in \R$ and $k \in \Z_0$.
However, the definition of $\Q_k(t)$ depends
on the information of the solution $u$ on the entire interval $[0, t]$,
whereas a conservation law in the usual sense  depends only 
on the information of the solution $u$ at this specific time $t$.
See \eqref{NF4} for the definition of $\Q_k(t)$.

The smallness condition in Theorem \ref{THM:main} is sharp.
In fact, we construct an explicit finite time blowup solution
for a non-small mean-zero initial condition.

\begin{theorem}[finite time blowup solution]\label{THM:2}
There exists a mean-zero function $\phi \in L_0^2(\T)$
and $t_* >0$
such that 
the corresponding solution $u$ to \eqref{DNLS1a} on the time interval $[0, t_*)$ with $u|_{t = 0} = \phi$
satisfies
\[ \lim_{t \to t_*-} \| u(t) \|_{L^p(\T)} = \infty\]

\noi
for any $1 \leq p \leq \infty$.
In particular, we have
\[ \lim_{t \to t_*-} \| u(t) \|_{\F L_0^{s, p}(\T)} = \infty\]

\noi
for $(s, p)$ satisfying the condition in Theorem \ref{THM:main}.

\end{theorem}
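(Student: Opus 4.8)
\emph{Proof idea.} The engine of the proof is that \eqref{DNLS1a} is \emph{exactly linearized} by a Cole--Hopf type substitution: if $v=v(t,x)$ solves the linear Schr\"odinger equation $\partial_t v=i\partial_x^2v$ on $\R\times\T$ and $v$ is nowhere vanishing, then a direct computation (matching the coefficients of $v_{xxx}/v$, $v_xv_{xx}/v^2$ and $v_x^3/v^3$) shows that
\[
u:=2i\,\partial_x\log v=2i\,\frac{\partial_x v}{v}
\]
solves \eqref{DNLS1a}. Conversely, given mean-zero $\phi$, set $v_0(x):=\exp\!\big(\tfrac{1}{2i}\int_0^x\phi(y)\,dy\big)$; since $\int_\T\phi=0$ this is a well-defined nowhere-vanishing function on $\T$ of winding number $0$, with $2i\,\partial_x\log v_0=\phi$, and differentiating $\exp\!\big(\tfrac1{2i}\int_0^x u(t,y)\,dy\big)$ in $t$ and using \eqref{DNLS1a} shows that every (sufficiently regular) mean-zero solution arises in this way from $v(t)=e^{it\partial_x^2}v_0$, up to the harmless time-dependent gauge $v\mapsto e^{\int_0^t c(s)\,ds}v$ with $c(s)$ depending on $s$ alone. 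Hence the \emph{only} mechanism by which $u$ can fail to be global is the appearance of a zero of $v(t)$, and conversely a simple zero of $v$ at $(t_*,x_*)$ forces $u$ to develop a first-order pole $\sim 2i/(x-x_*)$ at $x_*$ as $t\to t_*^-$, whence $\|u(t)\|_{L^p(\T)}\to\infty$ for every $1\le p\le\infty$.

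So it suffices to produce $v_0\in C^\infty(\T)$, nowhere vanishing and of winding number $0$, such that $v(t):=e^{it\partial_x^2}v_0$ is nowhere vanishing for $t\in[0,t_*)$ but has a simple zero at some $t_*\in(0,\infty)$. Granting this, I put $\phi:=2i\,\partial_x\log v_0\in C^\infty(\T)\subset L^2_0(\T)$, which is mean-zero because $v_0$ has winding $0$; the solution of \eqref{DNLS1a} with $u|_{t=0}=\phi$ (unique, since, as noted above, every mean-zero solution arises via Cole--Hopf) is $u(t)=2i\,\partial_x\log v(t)$, defined precisely on $[0,t_*)$, and it blows up in every $L^p(\T)$ as $t\to t_*^-$ by the previous paragraph. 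The blowup in $\F L_0^{s,p}(\T)$ then follows from the embedding $\F L_0^{s,p}(\T)\subset L^2_0(\T)$ for $(s,p)$ as in Theorem~\ref{THM:main} (taking $p=2$ in the $L^p$ statement already suffices).

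For the construction I use the three Fourier modes $\{-1,0,1\}$: with $v_0(x)=a_0+a_1e^{ix}+a_{-1}e^{-ix}$, $a_j\in\C$, and $(\pm1)^2=1$, the linear flow is explicit,
\[
v(t,x)=a_0+e^{-it}\big(a_1e^{ix}+a_{-1}e^{-ix}\big),
\]
so $v(t,x_0)=0$ iff $a_1e^{ix_0}+a_{-1}e^{-ix_0}=-a_0e^{it}$. As $x_0$ ranges over $\T$ the left side traces an ellipse $E\subset\C$ centred at the origin with semi-axes $|a_1|+|a_{-1}|$ and $\big||a_1|-|a_{-1}|\big|$, while as $t$ ranges over $\R$ the right side traces the circle $\Gamma$ of radius $|a_0|$ centred at the origin. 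I then choose $a_0,a_1,a_{-1}$ so that: (i) $\Gamma$ meets $E$ (e.g. $\big||a_1|-|a_{-1}|\big|<|a_0|<|a_1|+|a_{-1}|$), so $v(t)$ vanishes at some time; (ii) $-a_0$ lies strictly outside the closed region bounded by $E$, so $v_0=a_0+(\text{point of }E)$ is nowhere $0$ and has winding number $0$; and (iii) the first time $t_*>0$ with $-a_0e^{it_*}\in E$ (which exists by (i)--(ii)) is a transversal crossing, so $v(t_*,\cdot)$ has a simple zero. This is possible precisely because the $a_j$ may be genuinely complex; for instance $a_1=1$, $a_{-1}=i\eps$, $a_0=e^{-i\pi/4}$ with $\eps>0$ small works: then $E$ lies at distance between $1-\eps$ and $1+\eps$ from the origin and meets the unit circle $\Gamma$ in four points near $x_0\in\{0,\tfrac\pi2,\pi,\tfrac{3\pi}2\}$, the point $-a_0=e^{3i\pi/4}$ sits just outside $E$, and one finds $t_*=\tfrac\pi4+O(\eps)$ with a simple zero of $v(t_*,\cdot)$.

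The one genuinely delicate point is reconciling (i) with (ii): $v_0$ must be nowhere zero (so that $\phi$ is a legitimate datum) yet $v(t)=e^{it\partial_x^2}v_0$ must acquire a zero in finite time. The naive attempts fail --- two modes merely translate a fixed zero of $v$, and three \emph{real, symmetric} modes force $v_0$ itself to vanish as soon as any $v(t)$ does --- so one must exploit the geometry of the rotating ellipse $E$ crossing the stationary circle $\Gamma$, with $-a_0$ placed in a ``corner'' lying within distance $|a_1|+|a_{-1}|$ of the origin but outside $E$. Once that configuration is fixed, the remaining steps --- smoothness and mean-zero of $\phi$, the identification and uniqueness of $u=2i\,\partial_x\log v$ as the solution of \eqref{DNLS1a} on $[0,t_*)$, and the quantitative $L^p$ blowup driven by the (simple) zero of $v$ at $t_*$ --- are routine.
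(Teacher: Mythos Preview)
Your approach is the same as the paper's: linearize via Cole--Hopf, $u=2i\,\partial_xW/W$ with $W$ solving the free Schr\"odinger equation, and engineer a three-mode $W_0$ so that $W(t)$ first acquires a zero at some $t_*>0$. The difference is only in the choice of example, and here the paper's is considerably simpler than yours.

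The paper takes
\[
W(t,x)=1-ie^{-it}\cos x,\qquad\text{i.e.}\quad a_0=1,\ a_1=a_{-1}=-\tfrac{i}{2}.
\]
Then $|W_0(x)|^2=1+\cos^2x\ge1$, so $W_0$ is nowhere zero and has winding number $0$ (it stays in $\{\Re z>0\}$); and $W(\tfrac\pi2,x)=1-\cos x$ vanishes at $x=0$, giving $t_*=\tfrac\pi2$. One then reads off
\[
u(t,x)=\frac{-2e^{-it}\sin x}{1-ie^{-it}\cos x},\qquad
|u(\tfrac\pi2,x)|=\frac{2|\sin x|}{1-\cos x}\sim\frac{4}{|x|}\quad(x\to0),
\]
and the $L^p$ divergence follows directly (monotone convergence on $\{|x|<\tfrac\pi2\}$, since the denominator $1-2\sin t\cos x+\cos^2x$ is decreasing in $t$ there). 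No ellipse/circle geometry is needed.

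Two remarks on your write-up. First, your assertion that ``three real, symmetric modes'' fail is correct, but it led you to an unnecessarily asymmetric example: the paper's choice \emph{is} symmetric ($a_1=a_{-1}$), only with purely imaginary off-diagonal entries, and this already breaks the obstruction. Second, in the paper's example the zero of $W(t_*,\cdot)$ is \emph{double} (since $\partial_xW(\tfrac\pi2,0)=\sin0=0$), yet $u$ still develops the simple pole $\sim 2i\cdot\partial_x(1-\cos x)/(1-\cos x)\sim 4i/x$; your ``simple zero of $v$'' is thus not the right hypothesis---what matters is that $\partial_xW/W$ has a genuine pole, which holds for any finite-order zero. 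Your own example $(a_0,a_1,a_{-1})=(e^{-i\pi/4},1,i\eps)$ appears to work (one checks $\min_x|v_0(x)|=\eps$ at $x=\tfrac{3\pi}{4}$, and $t_*\approx\tfrac\pi4$), but the verification you sketched should be carried out; the paper's example avoids all of this.
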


Recall the following Galilean invariance for \eqref{DNLS1a} on $\T$;
if $u$ is a solution to \eqref{DNLS1a} on $\T$, then
\begin{align}\label{Galilei}
u_c(t, x) : =  u(t, x + ct) + c
\end{align}

\noi
is also a solution to \eqref{DNLS1a} for any $c \in \R$.	
This Galilean invariance allows
us to convert a function of a real-valued spatial mean
to a mean-zero function.
Namely, given an initial condition $\phi \in L^2(\T)$
with $\ft \phi(0) \in \R$, we  can convert it into 
a mean-zero initial condition $\phi_c= \phi + c$ with $c = \ft \phi (0)$	
and construct a global solution $u_c \in C(\R; L^2_0(\T))$ to \eqref{DNLS1a} with $u_c|_{t = 0} = \phi_c$
as long as the transformed initial condition $\phi_c$
satisfies the smallness condition stated in Theorem \ref{THM:main}.
Then, by inverting \eqref{Galilei}, 
we obtain a global solution $u\in C(\R; L^2(\T))$ to \eqref{DNLS1a}
with $u |_{t = 0} = \phi$.

Let us now point out how Theorem \ref{THM:main}
does not contradict the ill-posedness result in \cite{Christ} mentioned above.
Christ  proved the norm inflation \eqref{NI}, 
using initial data of the form $\phi_\eps(x) = i A_\eps + i B_\eps e^{iN_\eps x}$
with $A_\eps, B_\eps > 0$ and  $N_\eps \in \mathbb N$.
In particular, the spatial mean of $\phi_\eps$ is purely imaginary
and hence this ill-posedness result is not applicable to our setting.
Lastly, note that the Galilean invariance \eqref{Galilei}
does not allow us to convert a function of a purely imaginary spatial mean
into a function of a real spatial mean
since it would involve non-real $c \in i \R$.

\medskip

There are two main ingredients 
in the proof of Theorem \ref{THM:main}:
(i)  normal form reductions
and (ii) (modified) Cole-Hopf transformation.
The normal form approach will be used
to handle local-in-time analysis in the low regularity setting,
while the modified Cole-Hopf transformation will
be used
to construct smooth local-in-time solutions
as well as obtain a global-in-time a priori estimate.

\smallskip

\noi
 {\bf (i) Normal form reductions.}
The basic idea of normal form reductions
from dynamical systems
is to renormalize 
the flow by removing non-resonant terms
in a nonlinearity at the expense of introducing
higher order nonlinear terms.
This is achieved by introducing a suitable new unknown; see \cite{A, Niko}. % in the context of ODEs and PDEs in smooth setting.
 We perform normal form reductions
at the level of the interaction representation $v(t) : = e^{it \dx^2} u(t)$.
If $u$ is a smooth solution to \eqref{DNLS1a} with spatial mean 0, 
then the interaction representation $v$ satisfies
\begin{align}
\partial_t \ft v(k) 
& = \frac{ik}{2} \sum_{\substack{k = k_1 + k_2\\k_1, k_2 \ne0}} e^{i (k^2 - k_1^2 - k_2^2) t} \,\ft v({k_1})\ft v({k_2})
 = \frac{ik}{2} \sum_{\substack{k = k_1 + k_2\\k_1, k_2 \ne0}} e^{2 i k_1 k_2 t} \,\ft v({k_1})\ft v({k_2})
\label{intro1}
\end{align}

\noi
for each $k \in \Z_0$.
Here, we used the fact that 
$\Phi(\bk) : =  k^2 - k_1^2 - k_2^2 = 2k_1k_2 $ under $k = k_1 + k_2$.
Then, by performing differentiation by parts, i.e.~integration by parts without an integral sign, 
we obtain 
\begin{align}
\partial_t \ft v(k) 
& = 
\dt \bigg[ \frac{ k}{2 } \sum_{\substack{k = k_1 + k_2\\k_1, k_2 \ne0}} 
e^{ i \Phi(\bk) t}  \frac{\ft v(k_1)\ft v(k_2)}{\Phi(\bk)} \bigg] 
- \frac{ k}{2 } \sum_{\substack{k = k_1 + k_2\\k_1, k_2 \ne0}} 
e^{i \Phi(\bk)  t}  \, \frac{\partial_t \big\{\ft v(k_1)\ft v(k_2)\big\}}{\Phi(\bk)} \notag \\
&=: \partial_t \ft {\N^2(v)}(k) + \ft {\B^3(v)}(k).
\label{intro2}
\end{align}

\noi
for each $k \in \Z_0$.
By this process, we have the modulation function 
$\Phi(\bk) = 2k_1 k_2$ appearing in the denominators,
yielding gain of derivatives.\footnote{Compare this with
the standard Fourier restriction norm method, 
where one gains only $\sim \frac 12$-power of the modulation function $\Phi(\bk)$.}
The price we have to pay is that 
 $w := v - \N^2(v)$ satisfies
$\dt w = \B^3(v)$, where 
$\B^3(v)$ now consists of a cubic nonlinearity
in view of \eqref{intro1} and \eqref{intro2}.
In order to handle
$\B^3(v)$, we need to perform differentiation by parts again.
In fact, 
we will iterate this differentiation by parts process 
infinitely many times (with suitable adjustments at each step)
in Section \ref{SEC:NF}
to renormalize \eqref{DNLS1a} into a simpler equation (see the normal form equation \eqref{NF2} below),
where nonlinear analysis can be carried out
with simple tools such as H\"older's and Young's inequalities.
In particular, we do not employ any auxiliary 
function spaces such as Strichartz spaces and the $X^{s, b}$-spaces, 
allowing us to prove 
 the unconditional uniqueness in Theorem \ref{THM:main}.

In \cite{BIT}, Babin-Ilyin-Titi introduced
this normal form approach via the differentiation by parts
for constructing solutions to 
 the KdV equation on $\T$.
Subsequently, this idea was applied to other dispersive PDEs
 \cite{GKO, GST, Kwon-Oh},
 allowing us to construct solutions to dispersive PDEs
 without relying on the Fourier restriction norm method. 
In \cite{GKO}, 
we further developed this idea and 
successfully implemented  an infinite iteration 
scheme of the so-called Poincar\'e-Dulac normal form reductions for the cubic NLS on $\T$
in the low regularity setting.
This normal form approach has various applications
such as exhibiting
 nonlinear smoothing \cite{ET}
 and 
establishing a good energy estimate \cite{OTz}.

\smallskip

\noi
 {\bf (ii) (modified) Cole-Hopf transformation.}
It is well known that 
the Cole-Hopf transformation \cite{Cole, Hopf}
transforms 
the viscous Burgers equation on $\R$:
\begin{equation} 
\dt u = \dx^2 u + u\dx u
\label{Burgers}
\end{equation}
into the linear heat equation.
More precisely, 
if $u$ is a smooth solution to  the viscous Burgers equation
\eqref{Burgers}, 
 then 
 \[ w(t,x) := e^{-\frac 12 \int_{-\infty}^x u(t,y)  dy }\]
 
 \noi 
solves the linear heat equation on $\R$: $ \dt w = \dx^2 w $. 
A similar trick works for  the quadratic dNLS \eqref{DNLS1a} on $\R$. 
If $u \in C^\infty_{t, \text{loc}}(\R; \S(\R))$ solves 
dNLS \eqref{DNLS1a} on $\R$, 
then 
\begin{align}
 w (t, x) := e^{-\frac i2 \int_{-\infty}^x u(t, y)dy } 
\label{gauge1}
 \end{align}

 \noi
  solves the linear Schr\"odinger equation:
$\dt w = i \dx^2 w$. 
See, for example,  \cite{TAO}.
Similar gauge transformations played an important role
in studying other dispersive PDEs with derivative nonlinearities 
such as the cubic DNLS \eqref{DNLS2} and the Benjamin-Ono equation 
\cite{HO, TaoBO}.

In the periodic case, 
these gauge transformations
have been suitably adjusted to study 
well-posedness 
of   the cubic DNLS \eqref{DNLS2} and the Benjamin-Ono equation.
See \cite{Herr, Molinet}.
For our problem, 
a naive approach would be the following.
Given a mean-zero function $\phi$ on $\T$, 
we define the Cole-Hopf transformation  $\GG_0$ 
by  
\begin{equation*} 
  \GG_0[\phi] : = e^{-\frac{i}{2} \J(\phi)},  
\end{equation*}

\noi
where $\J(\phi)$ is the mean-zero primitive of $\phi$ defined by 
$ \J(\phi)_k := \frac{\phi_k}{ik}$ if $k \ne 0 $, and $\J(\phi)_k := 0$ if  $k=0$. 
Note that $\GG_0[\phi]$ is a periodic function on $\T$, since $\J(\phi)$ is periodic.
Given   a smooth mean-zero solution $u$ to dNLS \eqref{DNLS1a} on $\T$, 
let  $w(t) := \GG_0[u(t)]$.
It turns out that $w$ does not quite satisfy the linear Schr\"odinger equation.
Hence, we need to introduce a suitable adjustment
to  the Cole-Hopf transformation; see \eqref{CH3} below.

Once we appropriately define the modified Cole-Hopf transformation $\GG$, 
we can transform any smooth mean-zero solution $u$ to \eqref{DNLS1a}
to a solution $W = \GG[u]$ to the linear Schr\"odinger equation.
Note that, even if $u$ is known to satisfy \eqref{DNLS1a}
locally in time, the gauged function $W$ exists globally in time
as a solution to the linear equation.
Then, an important  question is to investigate
the invertibility of this transformation.
It turns out that 
the inverse transformation is given by 
\begin{equation*} 
u(t, x) = \GG^{-1}[W](t, x) := 2i \frac{\dx W(t, x)}{W(t, x)}
\end{equation*}

\noi
for a smooth mean-zero solution $u$ to \eqref{DNLS1a}.
Thus, $W(t, x) \ne 0$ guarantees the invertibility of the modified Cole-Hopf transformation.
In Subsection \ref{SUBSEC:GWP1}, 
we discuss possible sufficient conditions for the invertibility in details 
and construct smooth global solutions to \eqref{DNLS1a}.

Interestingly, by interpreting this modified Cole-Hopf transformation
from a geometric point of view, 
we obtain a necessary condition for possible spatial means of initial data $\phi$ for \eqref{DNLS1a}.
Given a smooth solution $u$ to \eqref{DNLS1a} on some time interval $I$, 
$u(t) $ is a periodic function for each $t \in I$.
In particular, it is a closed loop in the complex plane.
Then, the transformed function $W(t) = \GG[u](t)$
is also a closed loop in the complex plane.
Therefore, $W$ must have a well defined index %, say 
around the origin.
This observation leads to $\int_\T \phi(x) dx = 4\pi n$, $n \in \Z$.
See 
Remark \ref{REM:winding}.

\smallskip

We conclude this introduction by mentioning a hidden connection 
between the normal form reductions and the modified Cole-Hopf transformation.
In Section \ref{SEC:NF}, 
we obtain an infinite series
as a result of an infinite iteration of normal form reductions. 
It turns out that this series is nothing but the Taylor expansion of (the derivative of
the interaction representation of) the transformed function  $W = \GG[u]$
solving the linear Schr\"odinger equation.
Namely, the two seemingly different approaches
reduce \eqref{DNLS1a} to the same linear Schr\"odinger equation (up to a differentiation); see Remark \ref{REM:linear}.
Such a 
reducibility to a linear equation
by normal form reductions is an important question, closely related to integrability.
For example, see
Nikolenko \cite{Niko}
in the smooth setting.

\smallskip

This paper is organized as follows.
In Section \ref{SEC:NF}, we implement an infinite iteration scheme 
of normal form reductions and rewrite \eqref{DNLS1a}
as  the normal form equation \eqref{NF2}.
By establishing nonlinear estimates, we  prove unconditional local well-posedness of the normal form equation. 
In Section \ref{SEC:CH}, we introduce a modified Cole-Hopf transformation,
converting \eqref{DNLS1a} into the linear Schr\"odinger equation. 
By investigating a sufficient  condition
for inverting the modified Cole-Hopf transform, 
we prove small data global well-posedness of \eqref{DNLS1a}
in $L^2_0(\T)$.
In Section \ref{SEC:GWP}, we put together the results
from Sections \ref{SEC:NF} and \ref{SEC:CH} and prove Theorem \ref{THM:main}.
 In  Appendix \ref{SEC:APP}, we present the proof of Theorem \ref{THM:2}
 by constructing an explicit example.

In view of the time-reversibility of the equation \eqref{DNLS1a}, 
we only consider positive times in the following.

\section{Normal form reductions}
\label{SEC:NF}

In this section, we perform  normal form reductions in an iterative manner
and rewrite \eqref{DNLS1a} as an equation involving infinite series.
Then, we establish an a priori estimate on solutions 
in the Fourier-Lebesgue spaces.
As mentioned in Section \ref{SEC:intro}, 
our main approach is to apply differentiation by parts iteratively
and reduce an equation into a {\it simpler} form at each step.
This procedure can be seen as an infinite dimensional version of the Poincar\'e-Dulac 
normal form reduction\footnote{In fact, our approach 
for \eqref{DNLS1a} is 
an infinite dimensional analogue of the Poincar\'e normal form reduction.  
Namely, there is no resonant term (modulo the correction terms) at each step of the iteration.}
for a finite dimensional system of ODEs \cite{A}. 
On the one hand, 
two iterations were sufficient
in \cite{BIT, GST, Kwon-Oh}.
On the other hand, 
we needed to iterate normal form reductions infinitely many times
in \cite{GKO} to prove unconditional global well-posedness for the cubic NLS on $\T$.
In the following, 
we also set up an infinite iteration scheme for \eqref{DNLS1a}.
As we see below, 
our argument for \eqref{DNLS1a} is somewhat simpler than that in \cite{GKO}.
This is due to miraculous symmetrizations at each step of the normal form reductions, 
which allows us to write an equation at each step in a very simple form.
In particular, there will be no resonant term  in the equation at each step
(modulo the correction terms $\RR^n(v)$
appearing in \eqref{I3z},  \eqref{I4z}, and \eqref{rec3a}).
Note that all the analysis in this section is of local-in-time nature.

\subsection{Formal iteration argument}
In the following, we first perform normal form reductions
at a  formal level.
Namely, in this subsection, we do not worry about issues such as the convergence
of infinite series, switching the time derivative with an infinite series, etc.
These issues will be addressed in the next subsection.

Before proceeding further, let us introduce some notations.
Given a periodic function $f \in L^2(\T)$, 
we define its Fourier coefficient $f_k$ by 
\[ f_k := \frac{1}{2\pi} \int_{\T}f(x) e^{-ikx}dx. \]

\noi
The Fourier inversion formula states
\[ f(x) = \sum_{k \in \Z} f_k \,e^{ikx}.\]

\noi
Then, we can rewrite \eqref{DNLS1a}  as 
\begin{align} 
\partial_t u_k = -i k^2 u_k + \frac{ik}{2}  \sum_{k = k_1+k_2} u_{k_1}u_{k_2}. 
\label{DNLS3}
\end{align}

\noi
In the following, we assume that $\int \phi \, dx = 0$.
Hence, it follows from the conservation of mean \eqref{mean}
that $u_0(t) = 0$ for all $t\in \R$
and it is understood that the summation is over non-zero frequencies
$\Z_0 := \Z \setminus \{ 0\}$.

We now introduce  the interaction representation $v$ by setting
\begin{align}
v(t) := S(-t) u(t), 
\label{IR1}
\end{align}

\noi
where $S(t) = e^{it \dx^2}$.
In terms of the Fourier coefficients, we have 
$v_k(t) = e^{i k^2 t} u_k(t)$. 
Then, the equation \eqref{DNLS3} becomes
\begin{equation}
\partial_t v_k = \frac{ik}{2} \sum_{k = k_1 + k_2} e^{i (k^2 - k_1^2 - k_2^2) t} \,v_{k_1}v_{k_2}. 
\label{DNLS4}
\end{equation}

Given 
$\bk = (k_1, k_2, \cdots) \in \Z_0^\infty$
and $n \in \mathbb{N}$, 
we define a length $|\bk|_n$ and a modulation function $\Phi_n(\bk)$
by setting 
\begin{align}
|\bk|_n &:= k_1 + k_2 + \cdots + k_n, \notag\\
\Phi_n(\bk) &:= \bigg( \sum_{i=1}^n k_i  \bigg)^2 - \sum_{i=1}^n k_i^2 = \sum_{1 \leq i < j \leq n} 2 k_i k_j.
\label{Phi}
\end{align}

\noi
By definition, we set $\Phi_1(\bk) = 0$.
Note that the modulation function $\Phi_2(\bk)$
appears as a phase factor in \eqref{DNLS4}.
As we see below, 
the modulation function  $\Phi_{n+1}(\bk)$ appears as a phase factor
at the $n$th  step of the iteration argument.
On the one hand, 
if $ \Phi_n(\bk)\ne 0$, corresponding to the so-called {\it non-resonant} term, and it is large in particular,
 then we expect some cancellation
 under a time integration.
 On the other hand, 
if $\Phi_n(\bk)=0$, corresponding to the so-called {\it resonant} term, 
then there is no cancellation under a time integration.
In the following, we exploit symmetrizations at each step
and show that all the terms we obtain are indeed non-resonant.

With $\Phi_2(\bk)  = 2k_1k_2$, we can rewrite \eqref{DNLS4} as 
\begin{align}
\label{I2}
 \partial_t v_k = \frac{ k}{4} 
 \sum_{|\bk|_2 = k} i \Phi_2(\bk) 
e^{i \Phi_2(\bk)t} \frac{v_{k_1} v_{k_2}}{k_1 k_2} =: \I^2(v)(k). 
\end{align}

\noi
Note that there is no resonant term $\Phi_2(\bk)=0$ in \eqref{I2} because $\bk \in \Z_0^\infty$.
For conciseness, 
 we use $\I^2_k(v)$ to denote the $k$th Fourier coefficient $\I^2(v)(k)$ of 
the multilinear form $\I^2(v)$
in the following.
A similar comment applies to other multilinear expressions.

We apply  differentiation by parts 
 and obtain
\begin{align}
\I_k^2(v) &= \dt 
\bigg[ \frac{ k}{4 } \sum_{|\bk|_2=k} e^{i \Phi_2(\bk) t}  \frac{v_{k_1}v_{k_2}}{k_1 k_2} \bigg] 
- \frac{ k}{4 } \sum_{|\bk|_2 = k} e^{i \Phi_2(\bk) t}  \, \frac{\partial_t (v_{k_1}v_{k_2})}{k_1 k_2} \notag \\
&=: \partial_t \N_k^2(v) + \B_k^3(v).
\label{I2a}
\end{align}

\noi
By symmetry, 
we assume that the time derivative in $\partial_t (v_{k_1}v_{k_2})$
falls only on $v_{k_2}$
and we double the contribution.
From \eqref{I2a} with \eqref{DNLS4}, we have 
\begin{align}
\B_k^3(v) 
& = -2\cdot \frac k 4 \sum_{|\bk|_2 = k} e^{i \Phi_2(\bk) t} \frac{v_{k_1} \partial_t v_{k_2}}{k_1 k_2}  \notag\\
& = -  \frac{ ik}{4} \sum_{k = k_1+k_2} e^{i \Phi_2(\bk) t} \, \frac{v_{k_1}}{k_1} 
\sum_{\substack{ k_2 = m_1 + m_2\\k_2 \ne 0}}  
e^{i (k_2^2 - m_1^2 - m_2^2) t} \,v_{m_1} v_{m_2}.
\label{I3}
\end{align}

\noi
In order to apply a symmetrization, 
we need to add and subtract the contribution from $m_1 + m_2 = 0$.
For this purpose, define $\RR_k^2(v)$ and $\I_k^3(v)$ by 
\begin{align}
\RR_k^2(v)  = \frac{i}{4} v_kM(u)
=  \frac{i}{4} v_k  \sum_{m \in \Z_0} e^{-2i m^2 t} v_m v_{-m}
\quad \text{and}\quad 
\I_k^3(v) :\! &=   \B_k^3(v)-\RR_k^2(v), 
\label{I3z}
\end{align}

\noi
where $M(u)$ is   given by 
\begin{align}
M(u): = \P_0 [u^2] = \frac{1}{2\pi} \int_\T u^2 dx.
\label{mass}
\end{align}

\noi
Here,  $\P_0$ denotes the Dirichlet projection onto the zeroth frequency.
We point out  that $M(u) \ne \|u\|_{L^2}^2$.
From \eqref{I2a} and \eqref{I3z}, we have 
\begin{align}
 \I_k^2(v) =\partial_t \N_k^2(v) + \RR_k^2(v) +  \I_k^3(v).
\label{I3x}
\end{align}

\noi
By symmetrization in $\{k_1, k_2, k_3\}$, we have
\begin{align}
\I^3_k(v) : = \B_k^3(v) - \RR_k^2(v)  
&= - \frac{ik}{4} \sum_{k=k_1+m_1+m_2} 
e^{i (k^2-k_1^2-m_1^2-m_2^2) t} \,\frac{v_{k_1}}{k_1} v_{m_1} v_{m_2}\notag  \\
&= - \frac{ ik}{8} \sum_{|\bk|_3 = k} 2k_2 k_3 \,e^{i \Phi_3(\bk) t} \,\frac{v_{k_1}v_{k_2}v_{k_3}}{k_1 k_2 k_3} \notag \\
&= - \frac{ k}{4 \cdot {3 \choose 2}} \sum_{|\bk|_3 = k}i  \Phi_3(\bk) e^{i \Phi_3(\bk) t} \,\frac{v_{k_1} v_{k_2} v_{k_3}}{k_1 k_2 k_3} \notag\\
&= - \frac{ k}{2^2 \cdot 3! } \sum_{|\bk|_3 = k}i  \Phi_3(\bk) e^{i \Phi_3(\bk) t} \,\frac{v_{k_1} v_{k_2} v_{k_3}}{k_1 k_2 k_3}.  
\label{I3y}
\end{align}

\noi
Note that  the symmetrization cancelled out
the resonant terms ($ \Phi_3(\bk) = 0 $) 
and there is no resonant term in the final expression 
in \eqref{I3y}. 
Therefore, we can perform differentiation by parts on $\I^3_k(v)$

\begin{remark} \label{REM:tdepend} \rm

Due to the presence of 
$e^{i \Phi_n(\bk) t}$ 
in their definitions, 
the multilinear expressions $\I^n(v)$ and $\N^n(v)$ 
are non-autonomous and in fact depend on $t$. 
Thus, strictly speaking, 
we should denote them by  $\I_k^n(t)(v(t))$ and $\N_k^n(t)(v(t))$. 
For simplicity of notations, however, we suppress such $t$-dependence
when there is no confusion.
The same comment applies to $\B^n(v)$ and $\RR^n(v)$.
\end{remark}

Differentiating $\I^3_k(v)$ by parts  yields 
\begin{align}
\I_k^3(v) &= \partial_t 
\bigg[ \frac{-k }{2^2 \cdot 3! } \sum_{|\bk|_3 = k} e^{i \Phi_3(\bk) t} \frac{v_{k_1} v_{k_2} v_{k_3}}{k_1 k_2 k_3} \bigg] 
+ \frac{k }{2^2 \cdot 3! }
\sum_{|\bk|_3 = k} e^{i \Phi_3(\bk) t} \frac{\partial_t ( v_{k_1} v_{k_2} v_{k_3} )}{k_1 k_2 k_3} \notag \\
&=: \partial_t \N_k^3(v) + \B_k^4(v).
\label{I3a}
\end{align}

\noi
Then, 
by symmetry as above, we have 
\begin{align}
\B_k^4(v) 
&= 
3 \cdot  \frac{k }{2^2 \cdot 3! }
\sum_{|\bk|_3 = k} e^{i \Phi_3(\bk) t} \frac{ v_{k_1} v_{k_2} \dt v_{k_3} }{k_1 k_2 k_3} \notag \\
& = 
 \frac{i k }{2^4 }
\sum_{|\bk|_3 = k} e^{i \Phi_3(\bk) t} \frac{ v_{k_1} v_{k_2}  }{k_1 k_2} 
\sum_{\substack{ k_3 = m_1 + m_2\\k_3 \ne 0}}  
e^{i (k_3^2 - m_1^2 - m_2^2) t} \,v_{m_1} v_{m_2}.
\label{I4y}
\end{align}

\noi
Define $\RR_k^3(v)$ and $\I_k^4(v)$ by 
\begin{align}
\RR_k^3(v)  = \frac{- ik }{2^4} M(u) \sum_{|\bk|_2 = k} e^{i \Phi_2(\bk)t} \frac{v_{k_1}v_{k_2}}{k_1k_2}
\quad \text{and} \quad
\I_k^4(v) \! &=   \B_k^4(v)-\RR_k^3(v).
\label{I4z}
\end{align}

\noi
From \eqref{I3a} and \eqref{I4z}, we have 
\begin{align}
 \I_k^3(v) =\partial_t \N_k^3(v) + \RR_k^3(v) +  \I_k^4(v).
\label{I4x}
\end{align}

\noi
From \eqref{I4y}, \eqref{I4z}, 
and symmetrization, we have
\begin{align*}
\I_k^4(v) 
&  = \frac{i k }{2^5} \sum_{|\bk|_4 = k} 2 k_3 k_4 e^{i \Phi_4 (\bk) t} \, \frac{v_{k_1} v_{k_2} v_{k_3} v_{k_4}}{k_1 k_2 k_3 k_4}
\notag \\
& = \frac{k }{2^5 \cdot {4 \choose 2}
} \sum_{|\bk|_4 = k} i \Phi_4 (\bk) e^{i \Phi_4 (\bk) t} \, \frac{v_{k_1} v_{k_2} v_{k_3} v_{k_4}}{k_1 k_2 k_3 k_4}
\notag \\
&= \frac{k}{2^3 \cdot 4!} 
\sum_{|\bk|_4 = k} i \Phi_4 (\bk) e^{i \Phi_4 (\bk) t} \, \frac{v_{k_1} v_{k_2} v_{k_3} v_{k_4}}{k_1 k_2 k_3 k_4}.
\label{I4}
\end{align*}

Figure \ref{FIG:iteration} shows  this iteration procedure schematically. 
We iterate this process of differentiation by parts and symmetrization indefinitely.

\begin{figure}[h]

\begin{equation*} 
\xymatrix{
\RR^2   \ar@{<-}[d] 
&
\RR^3   \ar@{<-}[d] 
 & \RR^4   \ar@{<-}[d] 
&   \\
\ \dt v = \I^2 \  \ar@{->}[d]   \ar@{->}[r] %^{->} [r] 
&
\ \I^3  \ \ar@{->}[d] \ar@{->}[r] %^{->} [r] 
 &\  \I^4  \  \ar@{->}[d] \ar@{->}[r] %^{->} [r] 
&  \ \cdots \\
 \dt \N^2
&
 \dt \N^3  
 &\dt \N^4  & 
}\end{equation*}

\caption{Schematic diagram of the iteration process.} \label{FIG:iteration}

\end{figure}

\noi
The following lemma describes the equation we obtain after the $n$th step.

\begin{lemma}  \label{LEM:iteration}
Let $n \in \mathbb N$.  Then, the equation  \eqref{DNLS4} can be rewritten 
as follows:
\begin{align}\label{finiteNF}
\partial_t v_k &= \I_k^2(v) =\partial_t \N_k^2(v) + \RR_k^2(v) +  \I_k^3(v) \notag \\
&= \cdots = \partial_t \bigg(\sum_{j = 2}^n \N_k^j(v) \bigg) 
+ \sum_{j = 2}^n \RR_k^j(v) 
+ \I_k^{n+1}(v), 
\end{align} 

\noi
where  $\I^n(v) $, $ \N^n(v)$,  
and $\RR^n(v)$
satisfy the following recursive relation: 
\begin{equation} \label{rec1}
\I_k^n(v) = \partial_t \N_k^n(v) + \RR_k^n(v) +\I_k^{n+1}(v), \quad n \geq 2.
\end{equation}

\noi
Moreover, the Fourier coefficients of  
$\I^n(v) $, $ \N^n(v)$, 
and $\RR^n(v)$,  $n\geq 2$,  are given by 
\begin{align}
\I_k^n(v) &= \frac{(- 1)^n k}{2^{n-1}\cdot n! } \sum_{|\bk|_n = k} i \Phi_n(\bk)  e^{i \Phi_n(\bk) t} 
\prod_{j=1}^n \frac{v_{k_j}}{k_j}, \label{rec2}\\
\N_k^n(v) &= \frac{(- 1)^n k}{2^{n-1}\cdot n! } \sum_{|\bk|_n = k} e^{i \Phi_n(\bk) t}
\prod_{j=1}^n \frac{v_{k_j}}{k_j}, \label{rec3}\\
\RR_k^n(v) &= \frac{-i}{4} M(u) \cdot  \N_k^{n-1}(v) .
\label{rec3a}
\end{align}

\noi
with the convention that 
\begin{align}
\N^1(v) = -v.
\label{rec3b}
\end{align}
\end{lemma}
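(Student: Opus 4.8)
The plan is to prove Lemma \ref{LEM:iteration} by induction on $n$, establishing simultaneously the decomposition \eqref{finiteNF}, the recursive relation \eqref{rec1}, and the closed-form expressions \eqref{rec2}--\eqref{rec3a}. The base case $n = 2$ has essentially been carried out in the discussion preceding the lemma: formula \eqref{I2} gives $\I^2_k(v)$ in the claimed form (matching \eqref{rec2} with $n=2$, since $2^{n-1}\cdot n! = 2^1 \cdot 2! = 4$ and $(-1)^2 = 1$), differentiation by parts in \eqref{I2a} produces $\N^2_k(v)$ of the form \eqref{rec3}, and \eqref{I3z}, \eqref{I3x} establish the $n=2$ instance of \eqref{rec1} together with \eqref{rec3a} (noting $\RR^2_k(v) = -\tfrac{i}{4}M(u)\N^1_k(v) = \tfrac{i}{4}M(u)v_k$ via the convention \eqref{rec3b}). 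So the real content is the inductive step.

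For the inductive step, assume \eqref{rec2}--\eqref{rec3a} hold for some $n \geq 2$; I must produce $\N^n_k(v)$, $\RR^n_k(v)$, and $\I^{n+1}_k(v)$ and verify their formulas. Starting from the expression \eqref{rec2} for $\I^n_k(v)$, I apply differentiation by parts: since $\partial_t e^{i\Phi_n(\bk)t} = i\Phi_n(\bk)e^{i\Phi_n(\bk)t}$, the factor $i\Phi_n(\bk)$ is absorbed, giving $\partial_t \N^n_k(v)$ with $\N^n_k(v)$ of the form \eqref{rec3}, plus a remainder in which $\partial_t$ falls on the product $\prod v_{k_j}$. By the symmetry of the summand under permutations of $(k_1,\dots,k_n)$ we may assume $\partial_t$ hits $v_{k_n}$ and multiply by $n$; substituting $\partial_t v_{k_n}$ via \eqref{DNLS4} (splitting $k_n = m_1 + m_2$ with $m_1, m_2 \ne 0$), this gives $\B^{n+1}_k(v)$. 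Then I add and subtract the $m_1 + m_2 = 0$ contribution: the subtracted-back piece is precisely $\RR^n_k(v) = -\tfrac{i}{4}M(u)\N^{n-1}_k(v)$ — here one checks that restricting $m_1 = -m_2 = m$ forces $k_n = 0$, which is excluded, so collapsing that variable reproduces an $(n-1)$-fold sum with an extra factor $\sum_m e^{-2im^2 t}v_m v_{-m} = M(u)$ (in the interaction representation, $\P_0[u^2] = \sum_m v_m v_{-m}e^{-2im^2t}$) — and the remaining full sum is $\I^{n+1}_k(v)$. The final and most delicate bookkeeping step is the symmetrization: writing the new phase as $k_n^2 - m_1^2 - m_2^2$ and combining with $\Phi_n(\bk)$ to get $\Phi_{n+1}$ of the relabeled $(n+1)$-tuple, then symmetrizing over all $n+1$ indices. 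The key algebraic identity is that, after symmetrization, $\sum_{1\le i<j\le n+1} 2k_ik_j \cdot (\text{something}) = \binom{n+1}{2}\cdot(\text{symmetric average})$, which converts the surviving quadratic factor $2k_{n}k_{n+1}$ (say) into $\tfrac{1}{\binom{n+1}{2}}\Phi_{n+1}(\bk)$; tracking the constants gives the denominator $2^{n}\cdot(n+1)!$ since $\tfrac{1}{2^{n-1}n!}\cdot\tfrac{1}{2}\cdot\tfrac{1}{\binom{n+1}{2}}\cdot(n+1) = \tfrac{1}{2^n (n+1)!}$, and an extra sign $(-1)$ appears from $\partial_t v_{k_n}$ carrying $\tfrac{ik_n}{2}$ against the $1/k_n$ already present, yielding $(-1)^{n+1}$ overall. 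This recovers \eqref{rec2} at level $n+1$.

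Once the recursion \eqref{rec1} is established at each step, \eqref{finiteNF} follows by telescoping: substitute \eqref{rec1} for $\I^2, \I^3, \dots, \I^n$ successively into $\partial_t v_k = \I^2_k(v)$, collecting the $\partial_t\N^j_k(v)$ and $\RR^j_k(v)$ terms and leaving $\I^{n+1}_k(v)$ as the last term. I expect the main obstacle to be the combinatorial/symmetrization step in the inductive verification of \eqref{rec2}: one has to be careful that the symmetrization genuinely cancels all resonant contributions $\Phi_{n+1}(\bk) = 0$ (so that the factor $\Phi_{n+1}(\bk)$ can be legitimately extracted and differentiation by parts can proceed at the next stage), and that the numerical constant matches exactly — the miraculous simplification to $2^{n-1}n!$ alluded to in the text is exactly this constant-tracking working out cleanly. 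A secondary but purely notational point, per Remark \ref{REM:tdepend}, is to keep in mind that all of $\I^n, \N^n, \B^n, \RR^n$ are time-dependent through the phases $e^{i\Phi_n(\bk)t}$, so that ``differentiation by parts'' really means the product-rule identity $f\partial_t g = \partial_t(fg) - g\partial_t f$ applied with $f = $ (phase) and $g = $ (product of $v_{k_j}$'s); since this is a purely algebraic identity, no convergence issue arises at this formal stage, as the subsection's preamble makes explicit.
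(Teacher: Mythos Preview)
Your proposal is correct and follows essentially the same inductive route as the paper: assume the closed form \eqref{rec2} for $\I^n$, differentiate by parts to produce $\N^n$ and $\B^{n+1}$, split off the $m_1+m_2=0$ contribution as $\RR^n$, relabel and symmetrize to recover $\I^{n+1}$ in the form \eqref{rec2}, and telescope for \eqref{finiteNF}. Two small bookkeeping corrections: the displayed constant identity is false as written (the factor from symmetry in the $n$ product factors is $n$, not $n+1$, and symmetrizing $k_nk_{n+1}$ gives $\tfrac{1}{2\binom{n+1}{2}}\Phi_{n+1}$, so the correct chain is $\tfrac{1}{2^{n-1}n!}\cdot n\cdot\tfrac12\cdot\tfrac{1}{2\binom{n+1}{2}}=\tfrac{1}{2^n(n+1)!}$); and the extra $(-1)$ comes from the product-rule step $\I^n=\partial_t\N^n-(\cdots)$, not from $\tfrac{ik_n}{2}$ against $1/k_n$ (that combination supplies the $i$ needed for $i\Phi_{n+1}$ in $\I^{n+1}$).
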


\begin{remark}\rm
We point out that the factor $ n!$ appears in the 
denominator of  $\N^n(v)$. 
This corresponds to a functional version of the Taylor expansion of an exponential function. See Section~\ref{SEC:CH}.
\end{remark}

\begin{proof}
We argue by induction.
The case $n = 1, 2, 3$ follows from \eqref{I2}, \eqref{I3x},  and \eqref{I4x}.
Now, suppose that \eqref{finiteNF} -- \eqref{rec3a} hold for some $n \in \mathbb N$.
Differentiating $\I_k^{n+1}(v) $ by parts, we have
\begin{align}
\I_k^{n+1}(v) 
&= 
\dt \bigg[\frac{(- 1)^{n+1} k}{2^{n}\cdot (n+1)! } \sum_{|\bk|_{n+1} = k} e^{i \Phi_{n+1}(\bk) t}
\prod_{j=1}^{n+1} \frac{v_{k_j}}{k_j}\bigg]  \notag\\
& \hphantom{X}+ 
\frac{(- 1)^{n+2} k}{2^{n}\cdot (n+1)! } \sum_{|\bk|_{n+1} = k}  e^{i \Phi_{n+1}(\bk) t} 
\dt \bigg( \prod_{j=1}^{n+1} \frac{v_{k_j}}{k_j}\bigg)
=: \partial_t \N_k^{n+1}(v) + \B_k^{n+2}(v).
 \label{rec5}
\end{align}

\noi
The first term $\N_k^{n+1}(v)$ readily satisfies \eqref{rec3}.
Let $\RR^{n+1}_k(v)$ be as in \eqref{rec3a}.
By symmetrization as before, 
 we have
\begin{align*}
\mathcal{I}_k^{n+2}(v)
:& = \mathcal{B}_k^{n+2}(v) - \RR^{n+1}_k(v)\\
&= (n+1)\cdot 
\frac{(- 1)^{n+2} i k}{2^{n+1}\cdot (n+1)! } \sum_{|\bk|_{n+1} = k}  e^{i \Phi_{n+1}(\bk) t} 
\prod_{j=1}^{n} \frac{v_{k_j}}{k_j} 
\notag\\
&  \hphantom{XXXXXX}
\times 
\sum_{k_{n+1} = m_1 + m_2}  
e^{i (k_{n+1}^2 - m_1^2 - m_2^2) t} \,v_{m_1} v_{m_2}\\
& 
= 
 \frac{(- 1)^{n+2} k}{2^{n}\cdot n! \cdot 4 \cdot {n +2\choose 2}} 
\sum_{|\bk|_{n+2} = k} i  \Phi_{n+2}(\bk)e^{i \Phi_{n+2}(\bk) t} 
 \prod_{j=1}^{n+2} \frac{v_{k_j}}{k_j}, 
\end{align*}

\noi
which agrees with \eqref{rec2}.
Lastly, \eqref{finiteNF} for the $(n+1)$-st step follows from 
\eqref{finiteNF} for the $n$th step
 and \eqref{rec5}.
\end{proof}

The proof of Lemma \ref{LEM:iteration} shows that, at each step
of the normal form reductions,  
the symmetrization completely eliminates the resonant terms
in $\I^{n}(v)$. 
Now, suppose that $\I_k^{n+1}(v) \to 0 $ as $ n \to \infty$.
Then, by taking $n \to \infty$ in \eqref{finiteNF}, 
we {\it formally} arrive at the limit equation:
\begin{equation} 
\partial_t v_k = \partial_t \bigg( \sum_{n=2}^\infty \N_k^n(v) \bigg)
+ \sum_{n=2}^\infty \RR_k^n(v).
\label{NF1}
\end{equation}

\noi
Integrating \eqref{NF1} in time and applying the Fourier inversion formula, we 
obtain the following  {\it normal form equation}:
\begin{equation} 
v(t) = \phi + \sum_{n=2}^\infty  \N^n(v(t)) - \sum_{n=2}^\infty\N^n(\phi) 
+ \int_0^t  \sum_{n=2}^\infty \RR^n(v(t')) dt'.
\label{NF2}
\end{equation}

\noi
In the following, we refer to \eqref{NF2} as the {\it normal form equation}.

\begin{remark}\label{REM:cons}\rm

(i)	
With \eqref{rec3a} and \eqref{rec3b}, we can rewrite
\eqref{NF1} as 

\begin{equation} 
 \partial_t \bigg( \sum_{n = 1}^\infty \N_k^n(v) \bigg)
= \frac{i}{4} M(u) 
\bigg( \sum_{n = 1}^\infty \N_k^n(v) \bigg), 
\label{NF3}
\end{equation}

\noi
where $u(t) = e^{it \dx^2} v(t) $.
Given a smooth function $v$ on $\R\times \T$, 
define 
\begin{align}
\Q_k(t) = \Q_k([0, t])
:= e^{- \frac{i}{4}\int_0^t M(u)(t') dt'}
\sum_{n = 1}^\infty \N_k^n(v)
\label{NF4}
\end{align}

\noi
for $k \in \Z_0$.
Then, 
given a global solution $v$ to \eqref{NF2}, 
the equation \eqref{NF3} formally yields
\begin{align}
\Q_k(t) = \Q_k(0)
\label{NF5}
\end{align}

\noi
 for any $t \in \R$.
 Hence, we found an infinite sequence $\{\Q_k(t)\}_{k \in \Z_0}$
 of the quantities that are formally invariant under the dynamics of
 the normal form equation \eqref{NF2}.
 We point out, however,  that $\Q_k$
is {\it not} a conservation law in the usual sense.
Namely, 
the definition \eqref{NF4} of $\Q_k(t)$
 depends on the information of
 the solution $v$ on 
 the entire interval  $[0, t]$.

\smallskip

\noi

(ii)
Define $q(t, x)$  by setting 
\begin{align*}
q_k(t) =  e^{-ik^2t } \Q_k(t), \quad k \in \Z_0.
\end{align*}

\noi
Then, it follows from \eqref{NF5} that 
$q_k(t) =    e^{-ik^2t } \Q_k(0)$.
Namely, 
$q$ is a solution to the linear Schr\"odinger equation:
$\dt q  = i \dx^2 q$.
Therefore, 
via the infinite iteration of normal form reductions, 
we have found, at least at a formal level, 
 a transformation $u \mapsto q$, 
 mapping a solution to \eqref{DNLS1a}
 to a solution to the linear Schr\"odinger equation. 
See Section \ref{SEC:CH} for more discussion on this issue.

\end{remark}

%
%%%%%%%%%%%%%%%%%%%%
%%%%%%%%%%%%%%%%%%%

\subsection{Analysis of the normal form equation}

In the previous subsection, 
we reduced the original dNLS \eqref{DNLS1a}
to the normal form equation \eqref{NF2}
through an infinite sequence of normal form reductions.
In this subsection, we first justify
the formal computations performed in the previous section, 
at least for smooth solutions to \eqref{DNLS1a}.
Then, we prove local well-posedness
of \eqref{NF2}
by establishing certain multilinear estimates.
In Section \ref{SEC:CH}, 
we transfer this well-posedness of \eqref{NF2}
to the original problem \eqref{DNLS1a}.

We first introduce a constant $Z_{s,p}$ which we will frequently use.
 
\begin{definition}\label{DEF:Zsp}\rm
For $s,p\in \R$ satisfying $s > -\frac{1}{p}$ and $p \geq 1$, we define a constant
$Z_{s, p}$ by 
\[ Z_{s,p} := \big\| |k|^{-(s+1)} \big\|_{\ell^{p'}(\Z_0)} = \Big[ 2 \,\zeta\big( (s+1) \,p' \big) \Big]^{\frac{1}{p'}}
< \infty, \]

\noi
where $\zeta(\tau) = \sum_{k = 1}^\infty k^{-\tau}$ is the Riemann zeta function 
and $p'$ is the H\"older conjugate of $p$: $\frac 1p + \frac 1{p'} = 1$.
 If $p = 1$, we can also define $Z_{s,p}$ for all $s \geq -1$ by 
\[ Z_{s,1} := \big\| |k|^{-(s+1)} \big\|_{\ell^{\infty}(\Z_0)} = 1 \quad \text{for all $s \geq -1$.} \]

For $s,p\in \R$ satisfying $s > \frac 12 -\frac{1}{p}$ and $p > 2$,
 we also define a constant
$z_{s, p}$ by 
\[ z_{s, p} := Z_{s-1,\frac{2p}{p+2}} < \infty\]

\noi
such that we have
\begin{equation}
 \|f \|_{L^2} \leq z_{s, p} \| f \|_{\F L_0^{s, p}}
\label{embed}
 \end{equation}

\noi
for any mean-zero function $f$ on $\T$.

\end{definition}

The following proposition shows that 
a smooth solution to \eqref{DNLS1a} indeed satisfies the normal form equation 
\eqref{NF2}.

\begin{proposition} \label{PROP:smooth}
Given $T > 0$, 
let $u$ be a smooth  solution to dNLS \eqref{DNLS1a} on $[0, T]$
with a smooth initial data $u|_{t = 0}= \phi$ satisfying $\int_\T \phi \, dx= 0$. 
Then,  its interaction representation $v(t) =  e^{-i t\dx^2} u(t)$ satisfies the normal form 
equation \eqref{NF2} on $[0, T]$.
\end{proposition}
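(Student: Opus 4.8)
The plan is to justify, for smooth mean-zero solutions, the passage from the exact $n$-step identity \eqref{finiteNF} of Lemma \ref{LEM:iteration} to the infinite-series normal form equation \eqref{NF2}. Since $u$ is smooth on $[0,T]$, say $u \in C^1([0,T]; H^\sigma(\T))$ for every $\sigma$, its Fourier coefficients $v_k(t) = e^{ik^2 t} u_k(t)$ satisfy, for each fixed $\sigma$, a bound of the form $\sup_{t \in [0,T]} |k|^\sigma |v_k(t)| \le C_\sigma$, and similarly $\sup_t |k|^\sigma |\dt v_k(t)| \le C_\sigma'$ using the equation \eqref{DNLS4}. The first step is to record absolute convergence: using \eqref{rec3}, \eqref{rec2}, \eqref{rec3a} and the fact that each factor $|v_{k_j}/k_j|$ is summable with a large power of $|k_j|$ to spare, one shows that for each fixed $k$ the series $\sum_{n\ge 2} \N_k^n(v(t))$, $\sum_{n\ge 2}\RR_k^n(v(t))$ converge absolutely and uniformly on $[0,T]$, and moreover that $\I_k^{n+1}(v(t)) \to 0$ as $n \to \infty$, uniformly on $[0,T]$. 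The quantitative input is that the smoothness of $u$ gives a decay rate on $\ft\phi$ (hence on $v_k$) that beats the combinatorial constants $2^{-(n-1)}/n!$ and the $n$-fold convolution sums; in fact $n!$ in the denominator makes this comfortable. This is the step I expect to carry the most bookkeeping, though it is not conceptually deep: one needs a clean multilinear bound showing $\|\N^n(v)\|$ (in a suitable norm, e.g. $\F L^{s,p}_0$ or even $H^\sigma$ for $\sigma$ large) is dominated by $\frac{C^n}{n!}\|v\|^n$ times a power of $\sigma$-type weights, which is essentially a special case of the estimates proved later in this subsection for \eqref{NF2} but applied to the already-smooth $v$.

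Next I would take $n \to \infty$ in the $n$-step identity \eqref{finiteNF}, rewritten in integrated form. Integrating \eqref{finiteNF} from $0$ to $t$ gives
\begin{equation*}
v_k(t) = \ft\phi(k) + \sum_{j=2}^n \N_k^j(v(t)) - \sum_{j=2}^n \N_k^j(\phi) + \int_0^t \sum_{j=2}^n \RR_k^j(v(t'))\, dt' + \int_0^t \I_k^{n+1}(v(t'))\, dt'.
\end{equation*}
Here $\N_k^j(\phi)$ denotes the expression \eqref{rec3} evaluated at $v = \phi$ with $t = 0$ (so the phase factors are $1$), which is consistent since $v(0) = \phi$. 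By the convergence established in the first step, each finite sum converges to the corresponding infinite series, the term $\int_0^t \I_k^{n+1}(v(t'))\,dt'$ tends to $0$ by dominated convergence (the integrand is bounded uniformly and tends to $0$ pointwise, or even uniformly), and we obtain \eqref{NF2} at the level of Fourier coefficients for each $k \in \Z_0$. One subtlety worth a sentence in the writeup: one must make sure that differentiating $\I_k^{n+1}(v)$ by parts in the inductive step of Lemma \ref{LEM:iteration} is legitimate, i.e. that $\dt$ genuinely commutes with the (finite) sum defining $\N_k^{n+1}$; this is immediate since for fixed $k$ the sum over $|\bk|_{n+1} = k$ is a finite sum and each summand is $C^1$ in $t$.

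Finally I would assemble the Fourier-side identity into the function-space statement \eqref{NF2}. Since each series $\sum_n \N^n(v(t))$ and $\int_0^t \sum_n \RR^n(v(t'))\,dt'$ converges in, say, $\F L^{s,p}_0(\T)$ (or $L^2_0$, or any Sobolev space — the smooth setting gives us room), the Fourier inversion formula applies term by term and the coefficientwise identity upgrades to the claimed identity in $C([0,T]; \F L^{s,p}_0(\T))$. The main obstacle, as noted, is purely the uniform multilinear convergence estimate in the first step; once that is in hand the rest is an exchange-of-limits argument via dominated convergence. I would present the convergence estimate as a short lemma (or cite the multilinear bounds proved just afterward for \eqref{NF2}), then give the $n\to\infty$ passage in a few lines.
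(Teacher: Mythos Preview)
Your proposal is correct and close in spirit to the paper's proof, but the organization differs in one useful way. The paper first proves the differential identity \eqref{NF1} at each fixed frequency (which requires checking that $\partial_t\big(\sum_{n\ge 2}\N_k^n(v)\big)=\sum_{n\ge 2}\partial_t\N_k^n(v)$, handled by bounding $|\partial_t\N_k^n|$ through $|\I_k^n|+|\I_k^{n+1}|+|\RR_k^n|$), and only then integrates in time and passes to the spatial side using $H^2$ control. You instead integrate the finite-$n$ identity \eqref{finiteNF} first and send $n\to\infty$ in the integrated equation. This bypasses the interchange of $\partial_t$ with the infinite sum altogether and is a genuine simplification of Part~1; the paper's route has the small advantage of recording \eqref{NF1} as a standalone differential identity (used again in Remark~\ref{REM:cons}), but your route reaches \eqref{NF2} more directly.

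One factual slip to fix: the sum $\sum_{|\bk|_{n+1}=k}$ is \emph{not} finite for fixed $k$; each $k_j$ ranges over $\Z_0$, so it is an infinite lattice sum. The differentiation-by-parts in Lemma~\ref{LEM:iteration} is therefore not ``immediate'' for this reason. It is still justified, but the correct argument is that smoothness of $u$ gives rapid decay of $v_{k_j}(t)$ and $\partial_t v_{k_j}(t)$, so for each fixed $n$ the series and its formal $t$-derivative converge absolutely and uniformly on $[0,T]$, whence term-by-term differentiation is legitimate. You should replace that sentence accordingly.
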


\begin{proof}%[Proof of Proposition \ref{PROP:smooth}] 
We break the proof into two parts.
We first prove that $v$ satisfies \eqref{NF1}
for each $k \in \Z_0$, assuming that $v \in C([0, T]; L^2(\T))$.
Then, by imposing a higher regularity, we 
show that $v$ satisfies the normal form equation \eqref{NF2}.

\smallskip
\noi
{\bf Part 1:}
In view of the time reversibility of \eqref{DNLS1a} and \eqref{NF2}, 
we only consider positive times.
Fix $T > 0$ and define constants $C_0 = C_0(T)$ by
\[
C_0 = C_0 (T):= \sup_{t \in [0, T]} \| u(t) \|_{L^2_x}.
 \]
 
 \noi
Then, from $|v_k(t)| = |u_k(t)|$ and Cauchy-Schwarz inequality,  we have
\begin{align*}
\sup_{t \in [0, T]}\|v_k(t)\|_{\ell^\infty}    \leq C_0
\quad \text{and} \quad 
\sup_{t \in [0, T]}\|k^{-1} v_k(t)\|_{\ell^1} 
 \leq  Z_{0,2} C_0.
\end{align*}

\noi
Hence for given $k\in \Z_0$, 
it follows from \eqref{rec2} with \eqref{Phi} and Young's inequality that
\begin{align}
|\I_k^n(v(t))| &\leq \frac{|k|}{2^{n-1} n!} 
\sum_{|\bk|_n = k} \big| \Phi_n(\bk) \big| \prod_{\l=1}^n \frac{|v_{k_\l}(t)|}{|k_\l|} \notag \\
& \leq \frac{|k|}{2^{n-2} n!} \sum_{1 \leq i <j  \leq n} 
\sum_{|\bk|_n = k} |v_{k_i}(t)| \,|v_{k_j}(t)|  \prod_{ \substack{1 \leq \l \leq n \\ \l \not= i,j}} \frac{|v_{k_\l}(t)|}{|k_\l|}\notag \\
&\leq \frac{|k|}{2^{n-2} n!} 
\sum_{1 \leq i < j \leq n} \|v_k(t) \|_{\ell^2}^2   \| k^{-1}v_k(t) \|_{\ell^1}^{n-2} \notag \\
&\leq |k|    \frac{Z_{0,2}^{n-2} C_0^{n}}{2^{n-1}(n-2)!}
\label{smooth1}
\end{align}

\noi
uniformly in $t \in [0, T]$.
Therefore, we have 
\begin{align}
 \lim_{n \to \infty} \sup_{t \in [0, T]} \I_k^n(v(t))  = 0 
\label{smooth1a}
 \end{align}

\noi
for each $k \in \Z_0$.
By a similar computation with \eqref{rec3}
and the triangle inequality %$|k| \les \max_{1\leq j \leq n}|k_j|$, we have
$|k| \le \sum_{j = 1}^n |k_j|$, we have
\begin{align}
\sup_{t \in [0, T]}|\N_k^n(v(t))| 
\leq 
\sup_{t \in [0, T]}
\frac{1}{2^{n-1} (n-1)!}  \|v_k(t)\|_{\ell^\infty}  \| k^{-1} v_k(t) \|_{\ell^1}^{n-1} 
\leq  \frac{Z_{0,2}^{n-1} C_0^{n}}{2^{n-1}(n-1)!}. 
\label{smooth2}
\end{align}

\noi
In particular, $\sum_{n = 2}^\infty \N_k^n(v(t))$ converges (absolutely and uniformly in $t \in [0, T]$).

From \eqref{rec3a} and \eqref{smooth2}, we obtain
\begin{align}
\sup_{t \in [0, T]}|\RR_k^n(v(t))| 
\leq  \frac{Z_{0,2}^{n-2} C_0^{n-1}}{2^{n}(n-2)!}
\sup_{t \in [0, T]} M(u(t))
\leq \frac{Z_{0,2}^{n-2} C_0^{n+1}}{2^{n}(n-2)!}
\label{smooth2a}
\end{align}

\noi
for $n \geq 3$.
When $n = 2$, we have 
\begin{align*}
\sup_{t \in [0, T]}|\RR_k^2(v(t))| 
\leq  \frac{1}{4} C_0 \sup_{t \in [0, T]} |v_k(t)|
\leq  \frac{1}{4} C_0^3, 
%\label{smooth2b}
\end{align*}

\noi
which is precisely \eqref{smooth2a} with $n = 2$.
Hence, 
$\sum_{n = 2}^\infty \RR_k^n(v(t))$ converges absolutely and uniformly in $t \in [0, T]$.

Lastly, it follows from \eqref{rec1}, \eqref{smooth1}, and \eqref{smooth2a}
that $\sum_{n=2}^\infty \partial_t \N_k^n(v(t))$ converges absolutely 
and uniformly in $t \in [0, T]$.
Indeed, we have
\begin{align*}
\sup_{t \in [0, T]} | \partial_t \N_k^n(v(t)) | 
& \leq \sup_{t \in [0, T]}| \I_k^n(v(t)) | + \sup_{t \in [0, T]}| \I_k^{n+1}(v(t)) | 
+ \sup_{t \in [0, T]}|\RR_k^n(v(t))| \\
&\leq 
|k|\bigg[\frac{Z_{0,2}^{n-2} C_0^{n}}{2^{n-1}(n-2)!}
+ \frac{Z_{0,2}^{n-1} C_0^{n+1}}{2^{n}(n-1)!}\bigg]
+ \frac{Z_{0,2}^{n-2} C_0^{n+1}}{2^{n}(n-2)!}.
\end{align*}

\noi
Hence, we have 
$\dt\big(\sum_{n = 2}^\infty \N_k^n(v(t)) )= \sum_{n = 2}^\infty \dt \N_k^n(v(t))$.
Therefore, with \eqref{finiteNF},  we have
\begin{align*}
\bigg|\partial_t v_k(t) & - \dt \bigg(  \sum_{n=2}^\infty   \N_k^n(v(t))\bigg)
 - \sum_{n = 2}^\infty \RR_k^j(v) \bigg| %\notag\\
  = \bigg|\partial_t v_k(t) - \sum_{n=2}^\infty \partial_t \N_k^n(v(t))
-\sum_{n = 2}^\infty \RR_k^j(v) \bigg| \notag\\
& \leq \lim_{N \to \infty}  \sum_{n=N+1}^\infty 
\big(|\partial_t \N_k^n(v(t))| + |\RR_k^n(v)| \big)
+\lim_{N \to \infty} |\I_k^{N+1}(v(t))|
=0
\end{align*}

\noi
 for each $k\in \Z_0$ and $t \in [0, T]$.  This proves \eqref{NF1} on $[0, T]$.

\medskip

\noi
{\bf Part 2:}
In order to obtain the normal form equation \eqref{NF2}, we need to invert
the Fourier transform and thus need to use a control on a higher regularity.
By a  computation similar to \eqref{smooth2}, we have
\begin{align*}
\sup_{k \in \Z_0} \sup_{t \in [0, T]} |k|^2 |\N_k^n(v(t))| 
\le 
 C_2 \frac{Z_{0,2}^{n-1} C_0^{n-1}}{2^{n-1}(n-1)!},
\end{align*}

\noi
where $C_2 =  C_2 (T):= \sup_{t \in [0, T]} \| u(t) \|_{H^2_x}.$
In particular, $\sum_{k \in \Z_0} \sum_{n = 2}^\infty \N_k^n(v(t))$
converges absolutely 
(and uniformly in $[0, T]$).
Hence, we have 
\begin{align}
 \sum_{n=2}^\infty   \N^n(v(t))-  \sum_{n=2}^\infty   \N^n(\phi)
& =  \sum_{n=2}^\infty\sum_{k \in \Z_0}  \big( \N_k^n(v(t))
-    \N_k^n(\phi)\big) e^{ikx} \notag \\
& = \sum_{k \in \Z_0}
\bigg(  \sum_{n=2}^\infty   \N_k^n(v(t))-  \sum_{n=2}^\infty   \N_k^n(\phi)\bigg)e^{ikx}.
\label{smooth3}
\end{align}

Similarly, 
 $\sum_{k \in \Z_0} \sum_{n = 2}^\infty \RR_k^n(v(t))$
converges absolutely 
(and uniformly in $[0, T]$)
with a uniform bound:
\begin{align*}
\sup_{k \in \Z_0} \sup_{t \in [0, T]} 
\sum_{k \in \Z_0} \sum_{n = 2}^\infty |\RR_k^n(v(t))|
\les \sum_{k \in \Z_0} \frac{1}{|k|^2}
\sum_{n = 2}^\infty
 C_2  \frac{Z_{0,2}^{n-2} C_0^{n}}{2^{n}(n-2)!}
 %\frac{Z_{0,2}^{n-1} C_0^{n}}{2^{n-1}(n-1)!}
 <\infty.
\end{align*}

\noi
Hence, by Dominated Convergence Theorem, 
we have 
\begin{align}
\int_0^t  \sum_{n=2}^\infty   \RR^n(v(t')) dt'
& =  \sum_{n=2}^\infty \int_0^t\sum_{k \in \Z_0}   \RR_k^n(v(t')) e^{ikx} dt' \notag\\
&  =  \sum_{k \in \Z_0}  \int_0^t \sum_{n=2}^\infty  \RR_k^n(v(t')) e^{ikx} dt'
\label{smooth4}
\end{align}

\noi
for $t \in [0, T]$.

Therefore, from \eqref{NF1}, \eqref{smooth3}, and \eqref{smooth4},  we have
\begin{align*}
v(t) 
& = \sum_{k \in \Z_0} v_k(t) e^{ikx} \\
& = \sum_{k \in \Z_0} \bigg(\phi_k 
+  \sum_{n=2}^\infty   \N_k^n(v(t))-  \sum_{n=2}^\infty   \N_k^n(\phi)
  + \int_0^t \sum_{n=2}^\infty  \RR_k^n(v(t')) dt'
\bigg)e^{ikx}\\
& = \phi + 
 \sum_{n=2}^\infty   \N^n(v(t))-  \sum_{n=2}^\infty   \N^n(\phi)
 + \int_0^t  \sum_{n=2}^\infty   \RR^n(v(t')) dt'
\end{align*}

\noi
for $t \in [0,T]$. 
This completes the proof.
\end{proof}

In the remaining part of this section, 
we consider the normal form equation \eqref{NF2}.
In particular, 
we prove
local well-posedness of \eqref{NF2}  in the Fourier-Lebesgue spaces $\F L_0^{s,p}$
for small initial data.  See Proposition \ref{PROP:NFGWP} below.
 This will be achieved  by the contraction mapping principle. 
  In the following lemma, we establish the key multilinear estimates of $\N^n(v) $ in $\F L_0^{s,p}$.

\begin{lemma} \label{LEM:FL}
Assume that  $(s, p)$ satisfy  \textup{(i)} $s>-\frac{1}{p}, \  p> 1$ or 
\textup{(ii)} $s\geq-1, \ p=1$. 
Then, for each integer $n \geq 2$, there exists $ C_s(n) > 0$ such that 
\begin{align}
\| \N^n(v) \|_{\F L_0^{s,p}} \leq C_s(n)  \frac{Z_{s,p}^{n-1}}{2^{n-1}(n-1)!} \,\|v\|_{\F L_0^{s,p}}^n 
\label{FL1}
 \end{align}
and
\begin{align} 
\hspace{-2mm}
\| \N^n(v) - \N^n(\widetilde{v}) \|_{\F L_0^{s,p}} 
\leq  C_s(n) \frac{n Z_{s,p}^{n-1}}{2^{n-1}(n-1)!} \big( \|v\|_{\F L_0^{s,p}} + \|\widetilde{v}\|_{\F L_0^{s,p}} \big)^{n-1} \| v-\widetilde{v} \|_{\F L_0^{s,p}}, 
\label{FL2}
 \end{align}
 
\noi
where $C_s(n)$ satisfies
\begin{align*}
C_s(n) = 
\begin{cases}
1, & \text{if } s\leq 0, \\
n^s, & \text{if } s> 0.
\end{cases}	
\end{align*}

\end{lemma}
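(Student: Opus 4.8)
The plan is to prove \eqref{FL1} directly from the closed-form expression \eqref{rec3} for $\N^n(v)$, estimating the $\ell^p(\Z_0)$-norm of $|k|^s \N_k^n(v)$ by the triangle inequality on the convolution structure, and then deduce \eqref{FL2} by the standard multilinear telescoping identity. Writing
\[
\N_k^n(v) = \frac{(-1)^n k}{2^{n-1} n!} \sum_{|\bk|_n = k} e^{i\Phi_n(\bk)t} \prod_{j=1}^n \frac{v_{k_j}}{k_j},
\]
I would bound $|\N_k^n(v)| \leq \frac{|k|}{2^{n-1} n!} \big( g * g * \cdots * g\big)(k)$ where $g(k) := |k|^{-1}|v_k|$ is the $n$-fold convolution over $\Z_0$, with the single factor of $|k|$ out front. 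The point is that $\prod_j |k_j|^{-1} |v_{k_j}|$ is precisely $\prod_j g(k_j)$, so the sum over $|\bk|_n = k$ is exactly the $n$-fold convolution $g^{*n}(k)$.

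First I would handle the frequency weight $|k|^s$. Using $|k| = \big|\sum_{j=1}^n k_j\big| \leq \sum_{j=1}^n |k_j|$, for $s \leq 0$ we have $|k|^s \leq$ (the smallest $|k_j|$)$^s \leq \prod$-type bounds are not needed; rather I distribute the weight: when $s \le 0$, $|k|^{s} \le \min_j |k_j|^s$ is awkward, so instead I use $|k|^s |v_k| \cdot |k|^{-1}$ and write $|k|^{s+1} \N_k^n(v)$ — but more cleanly, since $s \le 0$ gives $|k|^s \le |k_j|^s$ for \emph{each} $j$ (as $|k| \ge |k_j|$ fails in general), I should instead simply observe $|k|^{\max(s,0)} \le C_s(n)\prod$ ... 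The honest route: for $s > 0$, by convexity $|k|^s = \big|\sum k_j\big|^s \le n^{s-1}\sum_j |k_j|^s \le n^s \max_j |k_j|^s$, and then placing the weight $|k_j|^s$ on the largest-frequency factor turns $g$ into $|k_j|^{s-1}|v_{k_j}|$ for that one slot while the other $n-1$ slots keep $g(k_j) = |k_j|^{-1}|v_{k_j}|$; the cost is the combinatorial factor $n$ from which of the $n$ slots is largest, absorbed into $C_s(n) = n^s$ together with the $n^{s-1}$. For $s \le 0$, $|k|^s \le 1 \cdot$ (we instead just move $|k|^s$ directly onto $\ft v(k)$ after the convolution): here $C_s(n)=1$ and one argues that $|k|^s g^{*n}(k)$ controls via $|k|^s |v_k|$ only if the weight can be pulled inside — this is where I invoke that for $s \le 0$ one actually estimates $\||k|^s \N^n(v)\|_{\ell^p}$ by noting $|k|^s \le |k_j|^s$ for the particular $j$ achieving $|k_j| = \max$, since $|k| \le n\max|k_j|$ and $s\le 0$ gives $|k|^s \ge (n\max|k_j|)^s = n^s \max|k_j|^s$ — wrong direction; the clean fix is $s \le 0 \Rightarrow |k|^{s} \le c$ only near $k\ne 0$, $|k|\ge 1$, so $|k|^s \le 1$, hence $C_s(n)=1$ and one simply drops the weight, bounding $\|\N^n(v)\|_{\ell^p}$ by the convolution — but then the right side should carry $\|v\|_{\ell^p}$ not $\|v\|_{\F L^{s,p}}$. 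The resolution, which I would carry out carefully, is: for $s\le 0$ write $|k|^s|v_k| = \widetilde g(k)\cdot|k|$ where we keep one factor weighted; since $|k|\le |k_1|+\cdots+|k_n|$ one has $|k|^s \le \min_j|k_j|^s$ is false, so instead use that $\N_k^n$ already contains the full factor $k$, and estimate $|k|^{s}|\N^n_k(v)| \le \frac{1}{2^{n-1}n!}\sum_{|\bk|_n=k}|k|^{s+1}\prod_j\frac{|v_{k_j}|}{|k_j|}$, then $|k|^{s+1}\le n^{\max(s+1-1,0)}\sum_j |k_j|^{s+1} \cdot$(const), placing $|k_j|^{s+1}$ on one slot to make it $|k_j|^{s}|v_{k_j}|$ while keeping $|k_\ell|^{-1}|v_{k_\ell}|$ elsewhere — giving $C_s(n)=1$ when $s\le 0$ (so $s+1\le 1$, convexity constant $n^{0}=1$) and $C_s(n)=n^s$ when $s>0$.

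Then I would apply Young's convolution inequality: the $n-1$ unweighted slots each contribute $\|g\|_{\ell^1} = \|\,|k|^{-1}v_k\|_{\ell^1} \le \||k|^{-(s+1)}\|_{\ell^{p'}}\||k|^s v_k\|_{\ell^p} = Z_{s,p}\|v\|_{\F L_0^{s,p}}$ by H\"older, and the one weighted slot contributes $\||k|^s v_k\|_{\ell^p} = \|v\|_{\F L_0^{s,p}}$ in $\ell^p$; since $\ell^1 * \cdots * \ell^1 * \ell^p \hookrightarrow \ell^p$, the $n$-fold convolution lies in $\ell^p$ with norm $\le Z_{s,p}^{n-1}\|v\|_{\F L_0^{s,p}}^n$. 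Combining with the prefactor $\frac{1}{2^{n-1}n!}$ and the combinatorial count (choosing which slot carries the weight among $n$ choices, then $n! = n\cdot(n-1)!$), the $n!$ becomes $(n-1)!$ and we land on \eqref{FL1} with the stated $C_s(n)$. For \eqref{FL2}, I would use the telescoping identity
\[
\prod_{j=1}^n \frac{v_{k_j}}{k_j} - \prod_{j=1}^n \frac{\widetilde v_{k_j}}{k_j} = \sum_{i=1}^n \bigg(\prod_{j<i}\frac{v_{k_j}}{k_j}\bigg)\frac{v_{k_i}-\widetilde v_{k_i}}{k_i}\bigg(\prod_{j>i}\frac{\widetilde v_{k_j}}{k_j}\bigg),
\]
giving $n$ terms; each is estimated exactly as above with one slot being $v-\widetilde v$ and bounding the remaining factors by $(\|v\|_{\F L_0^{s,p}} + \|\widetilde v\|_{\F L_0^{s,p}})^{n-1}$, producing the extra factor of $n$ in \eqref{FL2}. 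The main obstacle is purely bookkeeping: correctly tracking the frequency weight $|k|^s$ so that exactly one convolution slot is measured in $\ell^p$ and the others in $\ell^1$, and verifying that the convexity/splitting constant is genuinely $1$ for $s \le 0$ and $n^s$ for $s>0$ rather than something worse; once the weight distribution is set up, Young's inequality and H\"older do the rest mechanically.
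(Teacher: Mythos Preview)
Your proposal is correct and, once you settle on the final version of the argument (write $|k|^{s+1}\prod_j |k_j|^{-1}|v_{k_j}|$, distribute $|k|^{s+1} \le C_s(n)\sum_\ell |k_\ell|^{s+1}$ via the inequality $(\sum a_j)^\theta \le n^{\max(\theta-1,0)}\sum a_j^\theta$ with $\theta = s+1$, then apply Young's and H\"older), it is exactly the paper's proof; the telescoping identity for \eqref{FL2} is also identical. Your write-up contains several false starts before reaching the right splitting of the weight --- in a clean version you should simply set $w_k = |k|^s v_k$, rewrite $\prod_j v_{k_j}/k_j$ as $\prod_j w_{k_j}/|k_j|^{s+1}$ (up to harmless signs), and apply the convexity bound directly.
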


\begin{proof}
Recall the following estimate:
\begin{align}
\bigg(\sum_{j = 1}^n a_j \bigg)^\theta
\leq 
\begin{cases}
\sum_{j = 1}^n a_j ^\theta, & \text{if } 0 \leq \theta \leq 1, \\
n^{\theta-1}\sum_{j = 1}^n a_j ^\theta, & \text{if } 1 \leq \theta < \infty, \\
\end{cases}	
\label{FL3}
\end{align}
	
\noi
where the first estimate follows from the concavity of $x \mapsto x^\theta$
and the second from H\"older's inequality.
Let $w_k := |k|^s v_k$ so that  $\| v \|_{\F L_0^{s,p}} = \| w_k \|_{\ell^p}$.
Then, 
by \eqref{rec3} and \eqref{FL3}
followed by Young's and H\"older's inequalities,  we have 
\begin{align}
\| \N^n(v) \|_{\F L_0^{s,p}} 
&\les_n  \bigg\| |k|^s k \sum_{|\bk|_n = k}
e^{i \Phi_n(\bk)t}
 \prod_{j=1}^n \frac{v_{k_j}}{k_j} \bigg\|_{\ell^p} 
\leq  \bigg\||k|^{s+1}  \sum_{|\bk|_n=k}  \prod_{j=1}^n \frac{|w_{k_j}|}{|k_j|^{s+1}} \bigg\|_{\ell^p}\notag \\
&\leq C_s(n) 
\sum_{\l=1}^n \bigg\| \sum_{|\bk|_n=k} |w_{k_\l}| \prod_{j \not= \l} \frac{|w_{k_j}|}{|k_j|^{s+1}} \bigg\|_{\ell^p} 
\leq C_s(n) n   \| w_{k} \|_{\ell^p}  \bigg\| \frac{w_{k}}{|k|^{s+1}}\bigg\|_{\ell^1}^{n-1}   \notag \\
&\leq C_s(n) n Z_{s,p}^{n-1} \|w_k\|_{\ell^p}^n.
\label{FL4}
\end{align}

\noi
Noting that the omitted constant in the first inequality is $1/(2^{n-1} \,n!)$, 
we obtain \eqref{FL1}.
The second estimate \eqref{FL2} follows from a similar argument
with the telescoping sum:
\[ \prod_{j=1}^n w_{k_j} - \prod_{j=1}^n \widetilde{w}_{k_j} 
= \sum_{i=1}^n w_{k_1} \cdots w_{k_{j-1}} \cdot (w_{k_j} - \widetilde{w}_{k_j})
 \cdot \wt w_{k_{j+1}} \cdots \wt w_{k_n}.
\]

\noi 
This completes the proof.
\end{proof}

We conclude this section 
by proving small data local well-posedness of the normal form equation \eqref{NF2}.

\begin{proposition}\label{PROP:NFGWP}
Suppose that  $(s, p)$ satisfy  \textup{(i)} $s>\frac 12 -\frac{1}{p}, \  p> 2$ or 
\textup{(ii)} $s\geq 0, \ p=2$. 
Then, there exists a constant $\dl_1= \dl_1 (s, p) > 0$ such that if $\phi \in \F L_0^{s,p}(\T)$ satisfies
\begin{align}
  \|\phi\|_{\F L_0^{s,p}} \leq  \dl_1   , 
\label{NFG0}
  \end{align}

\noi
then there exist $T = T(\|\phi\|_{\F L^{s, p}_0})>0$
and  a unique  solution $v$
to the normal form equation \eqref{NF2} in $C([0, T];\F L_0^{s,p}(\T))$ 
with  $v|_{t = 0} = \phi$.
\end{proposition}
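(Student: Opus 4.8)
The plan is to set up a fixed-point argument for the integral equation \eqref{NF2} in the Banach space $X_T := C([0,T]; \F L_0^{s,p}(\T))$ equipped with the norm $\|v\|_{X_T} := \sup_{t \in [0,T]} \|v(t)\|_{\F L_0^{s,p}}$. Define the map
\[
\Gamma v (t) := \phi + \sum_{n=2}^\infty \N^n(v(t)) - \sum_{n=2}^\infty \N^n(\phi) + \int_0^t \sum_{n=2}^\infty \RR^n(v(t'))\, dt'.
\]
First I would use Lemma \ref{LEM:FL} to control the series $\sum_{n\geq 2} \N^n(v(t))$: since $C_s(n) \leq n^{\max(s,0)}$ grows only polynomially, the bound \eqref{FL1} shows that $\sum_{n\geq 2} \|\N^n(v(t))\|_{\F L_0^{s,p}} \les \sum_{n\geq 2} \frac{n^{\max(s,0)} Z_{s,p}^{n-1}}{2^{n-1}(n-1)!} R^n$ converges for any fixed radius $R = \|v\|_{X_T}$, and in fact is $\les R^2$ for $R$ bounded (say $R \leq 1$), with a constant depending on $s,p$ but summable in $n$. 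The same estimate applied to $\phi$ gives control of the second series, and since $\RR^n(v) = -\frac{i}{4} M(u) \N^{n-1}(v)$ with $M(u) = \P_0[u^2]$ satisfying $|M(u)| \les \|u\|_{L^2}^2 \les z_{s,p}^2 \|v\|_{\F L_0^{s,p}}^2$ by \eqref{embed} (the interaction representation preserves Fourier magnitudes), the third series is bounded by $T \cdot z_{s,p}^2 R^2 \cdot \sum_{n\geq 2}\|\N^{n-1}(v)\|_{\F L_0^{s,p}} \les T R^4$, again with summable constants. Thus $\Gamma$ is well-defined on $X_T$.

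Next I would verify that $\Gamma$ maps a ball $B_R := \{v \in X_T : \|v\|_{X_T} \leq R\}$ into itself. Choosing $R = 2\|\phi\|_{\F L_0^{s,p}} \leq 2\dl_1$, the estimates above give
\[
\|\Gamma v\|_{X_T} \leq \|\phi\|_{\F L_0^{s,p}} + C(s,p)\big(R^2 + \|\phi\|_{\F L_0^{s,p}}^2 + T R^4\big) \leq \|\phi\|_{\F L_0^{s,p}} + C(s,p)\big((1+T)R^2\big),
\]
and so if $\dl_1$ is small enough and $T \leq 1$, the right side is $\leq R$. For the contraction property, I would use the difference estimate \eqref{FL2} for $\N^n(v) - \N^n(\wt v)$ together with an analogous difference bound for $\RR^n$ — which follows by writing $\RR^n(v) - \RR^n(\wt v) = -\frac{i}{4}(M(u) - M(\wt u))\N^{n-1}(v) - \frac{i}{4} M(\wt u)(\N^{n-1}(v) - \N^{n-1}(\wt v))$ and bounding $|M(u) - M(\wt u)| \les z_{s,p}^2(\|v\|_{\F L_0^{s,p}} + \|\wt v\|_{\F L_0^{s,p}})\|v - \wt v\|_{\F L_0^{s,p}}$ — to obtain
\[
\|\Gamma v - \Gamma \wt v\|_{X_T} \leq C(s,p)\big((1+T) R\big)\|v - \wt v\|_{X_T} \leq \tfrac12 \|v - \wt v\|_{X_T}
\]
once $R$ (hence $\dl_1$) is small enough. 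By the Banach fixed point theorem there is a unique $v \in B_R$ with $\Gamma v = v$; a standard continuity-in-time argument (each term of the uniformly convergent series is continuous in $t$, and the time integral is continuous) confirms $v \in C([0,T]; \F L_0^{s,p}(\T))$, and uniqueness in the full space $X_T$ (not just in $B_R$) follows by absorbing the linear-in-$\|v-\wt v\|$ factor on a possibly shorter time interval, then iterating.

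The main technical point — really the only place where the hypothesis $(s,p)$ with $s > \tfrac12 - \tfrac1p,\ p>2$ or $s \geq 0,\ p = 2$ (rather than the weaker range in Lemma \ref{LEM:FL}) is actually used — is the need for the embedding \eqref{embed}, i.e. $\F L_0^{s,p}(\T) \hookrightarrow L^2_0(\T)$, in order to estimate $M(u) = \P_0[u^2]$ arising in the $\RR^n$ terms: one needs $\|u^2\|_{L^1} \les \|u\|_{L^2}^2 < \infty$, and $z_{s,p} < \infty$ exactly captures that the given $(s,p)$-range embeds into $L^2$. Everything else — the convergence of the normal-form series, the smallness closing the contraction — works in the full Lemma \ref{LEM:FL} range, so the restriction to the smaller $(s,p)$-set is genuinely forced by this one quadratic-in-$u$ correction term. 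I would flag this dependence explicitly, since it is exactly why Theorem \ref{THM:main} is confined to spaces sitting inside $L^2_0(\T)$.
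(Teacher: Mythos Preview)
Your contraction-mapping setup, the use of Lemma~\ref{LEM:FL} for the $\N^n$ series, and the handling of the $\RR^n$ terms via \eqref{rec3a} and the embedding \eqref{embed} are exactly the paper's Part~1 argument (your $TR^4$ should be $TR^3$, since the $n=2$ term of $\sum_{n\ge2}\|\N^{n-1}(v)\|$ is $\|v\|$, but this is harmless). Your observation that the restriction on $(s,p)$ enters only through $z_{s,p}<\infty$ in bounding $M(u)$ is also correct and matches the paper.

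The one place you diverge from the paper is the continuity in time. You work directly in $C([0,T];\F L_0^{s,p})$ and assert that ``each term of the uniformly convergent series is continuous in $t$''. The paper instead runs the fixed point in $L^\infty([0,T];\F L_0^{s,p})$, explicitly notes that the boundary terms $\N^n(t)(v(t))$ (which carry unbounded phases $e^{i\Phi_n(\bk)t}$ with no time integral to help) do not obviously yield continuity, and then spends Parts~2 and~3 proving persistence of regularity and approximating by smoother data to upgrade to $C_t$. Your claim is in fact true, but it is not as immediate as you suggest: for fixed $w\in\F L_0^{s,p}$ one shows $t\mapsto\N^n(t)(w)$ is continuous in $\F L_0^{s,p}$ by a two-layer dominated convergence (first over $\bk$ with $|\bk|_n=k$, using that $w_k/k\in\ell^1$ so the inner sum is absolutely convergent; then over $k$ in $\ell^p$, using the majorant from the proof of Lemma~\ref{LEM:FL}), and then combines this with the Lipschitz bound \eqref{FL2} in $w$. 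You should spell this out rather than call it ``standard''. That said, your route is shorter for the proposition as stated; the paper's longer route has the side benefit of establishing persistence of regularity (Part~2), which is reused in Remark~\ref{REM:pers} and in Section~\ref{SEC:GWP}.
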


\begin{proof}
{\bf Part 1:} We first construct a  local weak solution in $L^\infty([0, T];\F L_0^{s,p}(\T))$.\footnote{
Here,  by a ``weak'' solution, 
we mean a solution without   continuity in time.}
 Given $\phi \in \F L_0^{s,p}(\T)$,  define a map $\G= \G_\phi$ on $L^\infty([0, T];\F L_0^{s,p}(\T))$ by
\begin{align}\G[v](t) := \phi + \sum_{n=2}^\infty \big(\N^n(t)(v(t))-\N^n(t)(\phi)\big)
+ \int_0^t \sum_{n=2}^\infty \RR^n(t')(v(t'))dt'.
\label{NFWP1b}
\end{align}

Define $A_s > 0$ to be a constant 
such that 
\begin{align}
 \frac{C_s(n) n}{2^{n-1}}  \leq A_s
\label{NFWP1a}
 \end{align}
 
 \noi
  for all $n \in \mathbb{N}$.
Let $B^s_R = B^s_R(T)$ be the closed ball in $X^s(T) : = L^\infty([0, T];\F L_0^{s,p}(\T))$
of radius $R
= 2 \|\phi\|_{\F L_0^{s,p}}$ centered at the origin.
In addition, we assume that $\|\phi\|_{\F L_0^{s,p}}$
is sufficiently small such that 
\begin{align}
A_s(e^{2Z_{s, p}R} - 1) \leq \frac 14
\label{NFWP1}
\end{align}

\noi
Lastly, we choose $T = T(R) = T(\|\phi\|_{\F L^{s, p}_0}) > 0$
such that 
\begin{align}
T 
= \min\bigg(\frac{ 1}{2A_s z_{s, p}^2 R^2 e^{Z_{s, p}R} }, 
\frac{1}{A_s z_{s, p}^2 R^2 e^{Z_{s,p}R} \big(2   +  e^{Z_{s,p}R} \big)}\bigg).
\label{NFWP2}
\end{align}

Let $v, \wt v \in B^s_R$.
Then, from Lemma \ref{LEM:FL} with 
\eqref{rec3a}, \eqref{embed}, \eqref{NFWP1},  and \eqref{NFWP2}, we have 
\begin{align}
\| \G[v]  \|_{X^s(T)} 
& \leq \|\phi\|_{\F L_0^{s,p}} 
+ \sum_{n=2}^\infty \big( \|\N^n(v(t))\|_{X^s(T)} + \|\N^n(\phi)\|_{X^s(T)} \big) 
\notag \\
& \hphantom{XXlllllXX}
+ \frac 14Tz_{s, p}^2 \| v \|_{ X^s(T)}^2 
 \sum_{n=1}^\infty  \|\N^n(v)\|_{X^s(T)}  \notag \\
 &\leq \|\phi\|_{\F L_0^{s,p}} 
+  \sum_{n=2}^\infty \frac{C_s(n)}{2^{n-1}} \frac{Z_{s,p}^{n-1}}{(n-1)!} 
\big( \|v\|_{X^s(T)}^n + \|\phi\|_{\F L_0^{s,p}}^n \big) \notag \\
& \hphantom{XXlllllXX}
+ \frac 14 T 
z_{s, p}^2
\| v \|_{X^s(T)}^2
 \sum_{n=1}^\infty 
 \frac{C_s(n)}{2^{n-1}} \frac{Z_{s,p}^{n-1}}{(n-1)!} \|v\|_{X^s(T)}^n\notag \\
&=  
 \|\phi\|_{\F L_0^{s,p}} \Big\{
  A_s \Big(e^{Z_{s,p} \|\phi\|_{\F L_0^{s,p}} } - 1 \Big) + 1\Big\}
+ A_s \|v\|_{X^s(T)} \Big( e^{ Z_{s,p} \|v\|_{X^s(T)} } - 1 \Big) \notag\\
& \hphantom{XXlllllXX}
+ \frac 14 T A_s z_{s, p}^2 
\|v\|_{X^s(T)}^3
e^{Z_{s,p} \|v\|_{X^s(T)}}\notag\\
& \leq  R. 
\label{NFWP3}
\end{align}

\noi
Similarly, we have
\begin{align}
\big\| \G[v]- &\G[\wt v ] \big\|_{X^s(T)}
\leq \sum_{n=2}^\infty \big\| \N^n(v) - \N^n(\wt v) \big\|_{X^s(T)} 
+ T \sum_{n=2}^\infty \| \RR^n(v)- \RR^n(\wt v)\|_{X^s(T)}
\notag\\
&\leq A_s \sum_{n=2}^\infty \frac{Z_{s,p}^{n-1}}{(n-1)!} 
\big(\|v\|_{X^s(T)}+\|\wt v \|_{X^s(T)}\big)^{n-1} \|v-\wt v\|_{X^s(T)}\notag \\
& + 
\frac 14 T  A_s z_{s, p}^2
\sum_{n=1}^\infty \frac{Z_{s,p}^{n-1}}{(n-1)!} \|v\|_{X^s(T)}^n
\big(\| v\|_{X^s(T)} +\| \wt v\|_{X^s(T)}\big)\|v - \wt v\|_{X^s(T)} \notag\\
& 
+ 
\frac 14 T  A_s z_{s, p}^2\| \wt v\|_{X^s(T)}^2  \sum_{n=1}^\infty \frac{Z_{s,p}^{n-1}}{(n-1)!} 
\big(\|v\|_{X^s(T)}+\|\wt v \|_{X^s(T)}\big)^{n-1} \|v-\wt v\|_{X^s(T)}\notag \\
&\leq A_s \big( e^{2Z_{s,p}R} - 1\big) 
 \|v-\wt v\|_{X^s(T)}
+ \frac 14 T  A_s z_{s, p}^2
\big(2R^2  e^{Z_{s,p}R} + R^2  e^{2Z_{s,p}R} \big) \|v-\wt v\|_{X^s(T)}\notag \\
&\leq  \frac 12  \|v-\wt v\|_{X^s(T)}.
\label{NFWP4}
\end{align}

\noi
Hence, it follows from 
 \eqref{NFWP3} and \eqref{NFWP4} 
that $\G$ is a contraction on $B^s_R$. 
Therefore, there exists a unique weak solution 
$v\in L^\infty([0, T];\F L_0^{s,p}(\T))$ to \eqref{NF2}
as long as $\|\phi\|_{\F L_0^{s,p}}  \leq \dl_1 \ll 1$, 
satisfying \eqref{NFWP1}.

Note that the argument above does not yield 
 continuity in time of the solution $v\in L^\infty([0, T];\F L_0^{s,p}(\T))$.
This is due to the presence of the boundary terms $\N^n(t)(v(t))$ 
without time integration
in the definition \eqref{NFWP1b} of $\G[v]$.
In the following, we will present an additional argument to show the fixed point $ v$ indeed lies in 
$C([0, T];\F L_0^{s,p}(\T))$.

\smallskip

\noi
{\bf Part 2:}
Next, we prove the persistence of regularity.
We now suppose that $\phi \in \F L^{s+\s, p}_0(\T)$
for some $\s > 0$.
Let $v \in B^s_R \subset L^\infty([0, T]; \F L_0^{s, p})$ be 
the solution to \eqref{NF2}
with $v|_{t = 0} = \phi$
constructed in Part 1,
where $T = T(\|\phi\|_{\F L^{s, p}_0})$ is chosen as in \eqref{NFWP2}.

Proceeding as in \eqref{FL4} with $w_k = |k|^s v_k$, we have
\begin{align}
\| \N^n(v) \|_{\F L_0^{s+\s ,p}} 
&\leq \frac{1}{2^{n-1}n!}
  \bigg\||k|^{s+\s+ 1}  \sum_{|\bk|_n=k}  \prod_{j=1}^n \frac{|w_{k_j}|}{|k_j|^{s+1}} \bigg\|_{\ell^p}\notag \\
&\leq \frac{C_{s+\s} (n) n Z_{s,p}^{n-1}}{2^{n-1}n!} \|v\|_{\F L^{s, p}_0}^{n-1}
\|v\|_{\F L^{s+\s, p}_0}.
\label{NFWP5}
\end{align}

\noi
Then, 
proceeding as in \eqref{NFWP3}
with  \eqref{NFWP5}, 
 we have 
\begin{align}
\| v  \|_{X^{s+\s}(T)} 
 &\leq \|\phi\|_{\F L_0^{s+\s,p}} \notag\\
& \hphantom{XX}
+  \sum_{n=2}^\infty \frac{C_{s+\s}(n)}{2^{n-1}} \frac{Z_{s,p}^{n-1}}{(n-1)!} 
\big( \|v\|_{X^s(T)}^{n-1}\|v\|_{X^{s+\s}(T)} 
+ \|\phi\|_{\F L_0^{s,p}}^{n-1}\|\phi\|_{\F L_0^{s+\s,p}} \big) \notag \\
& \hphantom{XX}
+\frac 14 T 
z_{s, p}^2
%\|v\|_{L^\infty_t L_x^2(T)}^2
 \sum_{n=1}^\infty 
 \frac{C_{s+\s}(n)}{2^{n-1}} \frac{Z_{s,p}^{n-1}}{(n-1)!} 
  \|v\|_{X^s(T)}^{n+1}\|v\|_{X^{s+\s}(T)} \notag \\
&=  
\|\phi\|_{\F L_0^{s+\s,p}} 
\Big\{
  A_{s+\s} \Big(e^{Z_{s,p} \|\phi\|_{\F L_0^{s,p}} } -  1 \Big) + 1\Big\}
+ A_{s+\s} \|v\|_{X^{s+\s}(T)} \Big( e^{ Z_{s,p} \|v\|_{X^s(T)} } - 1 \Big) \notag\\
& \hphantom{XX}
+ \frac 14 T A_{s+\s} 
z_{s, p}^2 \| v\|_{X^s(T)}^2
e^{Z_{s,p} \|v\|_{X^s(T)}}
\|v\|_{X^{s+\s}(T)}, 
\label{NFWP6}
\end{align}

\noi
where $A_{s+\s}$ is as in \eqref{NFWP1a}.
Given $\s > 0$, 
we choose
$\dl(s, \s) >0$ sufficiently small 
such that  $R = 2 \| \phi\|_{\F L ^{s, p}_0} $
with $ \| \phi\|_{\F L ^{s, p}_0} \leq \dl(s, \s) $
 satisfies 
% \eqref{NFWP1}
%and 
\begin{align}
A_{s+\s} (e^{Z_{s, p}R} - 1) \leq \frac 14.
\label{NFWP7}
\end{align}

\noi
If necessary, we make $T$ smaller such that 
\begin{align}
T \leq  \frac{ 1}{A_{s+\s} z_{s, p}^2 R^2 e^{Z_{s, p}R} }, 
\label{NFWP8}
\end{align}

\noi
Hence, from \eqref{NFWP6}, \eqref{NFWP7}, and \eqref{NFWP8}, 
we obtain 
\begin{align*}
\| v  \|_{X^{s+\s}(T)} 
&\leq  
\frac {5}{2}\|\phi\|_{\F L_0^{s+\s,p}}. 
\end{align*}

\noi
Therefore, we conclude that 
 $v \in  L^\infty([0, T]; \F L^{s+\s, p}_0)$.
 The important point is that, while $T = T(s, \s)$ now depends on
$ \|\phi\|_{\F L_0^{s,p}}$
 and  
  $\s$, 
 it is independent of 
$\|\phi\|_{\F L_0^{s+\s,p}}$.

\medskip

\noi
{\bf Part 3:}
Lastly, we prove that the solution constructed in Part 1 is indeed
in  $C([0, T];\F L_0^{s,p}(\T))$. 
We first prove continuity in 
$\F L_0^{s,p}(\T)$ for smoother solutions.
Given small $\phi \in \F L_0^{s+1,p}(\T)$, 
let $v \in B_{R}^s \subset L^\infty([0, T];\F L_0^{s,p}(\T))$
be the weak solution with $v|_{t = 0} = \phi$ constructed in Part 2, 
where $T = T(s, \s)$ with $\s = 1$.
Note that from Part 2, we have
\begin{align}
\|v\|_{L^\infty([0, T]; \F L^{s+1, p}_0)} \leq 
R_{s+1}: = R_{s+1} (\|\phi\|_{\F L^{s+1, p}_0}).
\label{NFWP8b}
\end{align}

Write \eqref{NF2} as 
\begin{equation} 
v(t) = \phi + \N (t) +  \RR (t), 
\label{NFWP8a} 
\end{equation}

\noi
where 
\[ \N(t) = \sum_{n=2}^\infty  \N^n(v(t)) - \sum_{n=2}^\infty\N^n(\phi) 
\quad \text{and} \quad
\RR (t)   = \int_0^t \sum_{n=2}^\infty \RR^n(v(t'))dt'.\]

\noi
Thanks to the time integral, 
using Lemma \ref{LEM:FL} with \eqref{rec3a}, 
one can easily show
that  $\RR(t) \in C([0, T]; \F L_0^{s, p}(\T))$.
Indeed, 
given $t_1, t_2 \in \R$, 
we have 
\begin{align}
\|\RR(t_1) - \RR(t_2) \|_{\F L^{s, p}_0} 
\leq  \frac 14 A_s z_{s, p}^2 
R^3
e^{Z_{s,p} R}
|t_1-t_2|.
%\label{NFWP9}
\end{align}

By the triangle inequality, we have
\begin{align}
\| \N^n(t_1)(v(t_1)) - \N^n(t_2)(v(t_2)) \|_{\F L_0^{s,p}} 
&\leq 
\| \N^n(t_1)(v(t_1)) - \N^n(t_2)(v(t_1)) \|_{\F L_0^{s,p}} \notag\\
& \hphantom{X}
+ 
\| \N^n(t_2)(v(t_1)) - \N^n(t_2)(v(t_2)) \|_{\F L_0^{s,p}} \notag\\
& =: \1_n + \2_n.
%\label{NFWP10}
\end{align}

\noi
Arguing as in \eqref{NFWP4} 
with \eqref{NFWP1}, 
we have 
\begin{align}
\sum_{n = 2}^\infty\2_n
\leq A_s \sum_{n = 2}^\infty \frac{ (2Z_{s,p}R)^{n-1}}{(n-1)!} 
 \| v(t_1) - v(t_2) \|_{\F L_0^{s,p}} 
\leq \frac 14
 \| v(t_1) - v(t_2) \|_{\F L_0^{s,p}}. 
%\label{NFWP11}
\end{align}

\noi
By Mean Value Theorem with $w_k := |k|^s v_k$, 
\eqref{Phi}, and \eqref{FL3}, 
we have
\begin{align*}
\1_n 
& 
\leq  \frac{1}{2^{n-1}n!}\bigg\||k|^{s+1}  \sum_{|\bk|_n=k} 
|\Phi(\bk)(t_1 - t_2)|
 \prod_{\l=1}^n \frac{|w_{k_\l}(t_1) |}{|k_\l|^{s+1}} \bigg\|_{\ell^p} \notag\\
& \le
|t_1 - t_2|
\frac{1}{2^{n-2}n!}\bigg\||k|^{s+1}  
\sum_{1 \leq i < j \leq n}
\sum_{|\bk|_n=k} 
|k_i| |k_j|
 \prod_{\l=1}^n \frac{|w_{k_\l}(t_1) |}{|k_\l|^{s+1}} \bigg\|_{\ell^p} \notag \\
&\leq 
|t_1 - t_2|
\frac{C_s(n)  n }{2^{n-1}(n-2)!}
 \big\| |k| w_{k} (t_1)\big\|_{\ell^p}  \bigg\| \frac{w_{k}(t_1)}{|k|^{s}}\bigg\|_{\l^1} \bigg\| \frac{w_{k}(t_1)}{|k|^{s+1}}\bigg\|_{\ell^1}^{n-2}   \notag \\
&\leq 
|t_1 - t_2|
\frac{C_s(n)  n Z_{s,p}^{n-1}}{2^{n-1}(n-2)!}
 \| v(t_1)  \|_{\F L_0^{s+1,p}}^2 
 \| v(t_1)  \|_{\F L_0^{s,p}}^{n-2}.
\end{align*}

\noi
Choosing  $B_s > 0$ such that 
$2^{-(n-1)} C_s(n) n Z_{s, p}  \leq B_s$ for all $n \in \mathbb{N}$, 
and recalling \eqref{NFWP8b}, 
we have 
\begin{align}
\sum_{n = 2}^\infty \1_n 
&\leq 
B_s R_{s + 1}^2 e^{Z_{s, p} R}
|t_1 - t_2|
\label{NFWP12} 
\end{align}

\noi
Hence, from \eqref{NFWP8a} -- \eqref{NFWP12}
we have
\begin{align*} 
\| v(t_1) - v(t_2) \|_{\F L_0^{s,p}} 
&\les 
 B_s R_{s + 1}^2 e^{Z_{s, p} R}
|t_1 - t_2|
+   A_s z_{s, p}^2 
R^3
e^{Z_{s,p} R}
|t_1-t_2|.
\end{align*}

\noi
Therefore, we conclude that 
 $v \in C(\R;\F L_0^{s,p}(\T))$.

Now, given small $\phi \in \F L_0^{s,p}(\T)$, 
let $v \in L^\infty([0, T];\F L_0^{s,p}(\T))$
be the weak solution with $v|_{t = 0} = \phi$ constructed in Part 2, 
where $T = T(s, \s)$ with $\s = 1$.
Let 
$\{\phi^{(j)}\}_{j \in \mathbb{N}} \subset \F L_0^{s+1,p}(\T)$
such that $\phi^{(j)}$ converges to $\phi$ in $\F L_0^{s,p}(\T)$, 
as $j \to \infty$.
Denote by $v^{(j)}$
the corresponding global solutions 
in $  L^\infty([0, T];\F L_0^{s+1,p}(\T))\cap  C([0, T];\F L_0^{s,p}(\T))$.
Recalling that 
$T = T(s, \s)$ depends only on $\| \phi^{(j)}\|_{\F L^{s, p}_0}$
and $\s > 0$ but is independent of $\| \phi^{(j)}\|_{\F L^{s+1, p}_0}$, 
we can choose 
$T = T(s, \s)$ independent of $j \in \mathbb{N}$.
Then, arguing as in \eqref{NFWP4}, we have 
\begin{align*}
\| v - v^{(j)}\|_{X^s(T)}
\leq 2 \| \phi - \phi^{(j)} \|_{\F L^{s, p}_0}
\end{align*}
	
\noi
Therefore, 
 $v^{(j)}(t)$ converges to $v(t)$ in the $\F L_0^{s,p}$-topology,  uniformly in $t\in [0, T]$.
Hence, as a uniform limit of continuous functions, 
we conclude that 
$v\in  C([0, T];\F L_0^{s,p}(\T))$.
\end{proof}

\begin{remark}[Unconditional uniqueness]\label{REM:uniq}\rm
Given $ \phi \in \F L^ {s, p}_0$ satisfying \eqref{NFG0}, 
suppose that $v, \wt v \in C([0, T]; \F L^ {s, p}_0)$
are two solutions to \eqref{NF2} with $v|_{ t= 0} =  \wt v|_{t = 0} = \phi$.
Without loss of generality, assume that $v$ is the solution constructed in Proposition
\ref{PROP:NFGWP}.
In particular, we have $v \in B_R^s(T)$ with $R =  2\|\phi\|_{\F L^ {s, p}_0}$.
In the following, redefine the local existence time $T$ in \eqref{NFWP2} and \eqref{NFWP8}
by replacing $R$ with $4R$.

In general, we may have $\wt v \notin  B_R^s(T)$.
By the continuity in time with $\wt v |_{t = 0} = \phi$, 
there exists small $\tau_1 > 0$ such that 
$\sup_{t \in [0, \tau_1]} \| \wt v(t)\|_{\F L^{s, p}_0} \leq  2 R.$
Then, it follows from from \eqref{NFWP4} that 
$v = \wt v$ in  $C([0, \tau_1]; \F L^{s, p}_0)$.
Since $\|\wt  v(\tau_1)\|_{\F L^{s, p}_0} = \| v(\tau_1)\|_{\F L^{s, p}_0} \leq R$, 
there exists small $\tau_2 > 0$ such that 
$\sup_{t \in [\tau_1, \tau_2]} \| \wt v(t)\|_{\F L^{s, p}_0} \leq  2 R.$
Then, it follows from from \eqref{NFWP4} that 
$v = \wt v$ in  $C([\tau_1, \tau_2]; \F L^{s, p}_0)$
and conclude that 
 $\|\wt  v(\tau_2)\|_{\F L^{s, p}_0}  \leq R$.
In this way, we can cover the entire interval $[0, T]$
and conclude that $v = \wt v $ in $C([0, T]; \F L^{s, p}_0)$.
Namely, we have unconditional uniqueness.
\end{remark}

\begin{remark}\label{REM:pers}\rm
In Part 2 of the proof of Proposition \ref{PROP:NFGWP}, 
we proved the persistence of regularity 
only for a fixed value of $p \geq 2$.
In general, the persistence of regularity
also holds for different values of $p$
as long as a proper embedding holds.
Let $B^{s, p}_R = B^{s, p}_R(T)$ be the closed ball in $X^{s, p}(T) : = L^\infty([0, T];\F L_0^{s,p}(\T))$
in the following.

Suppose that $\phi \in \F L^{s, p}_0(\T)$
with $s > \frac 12 - \frac 1p$ such that 
$ \F L^{s, p}_0(\T) \subset L^2(\T)$.
Let $v \in B_R^{0, 2} \subset X^{0, 2}(T)$ be 
the solution to \eqref{NF2}
with $v|_{t = 0} = \phi$
constructed in Part 1, 
where $T = T(\|\phi\|_{L^2})$ is chosen as in \eqref{NFWP2}.
Proceeding as in \eqref{FL4} and \eqref{NFWP5} with $w_k = |k|^s v_k$, we have
\begin{align}
\| \N^n(v) \|_{\F L_0^{s ,p}} 
&\leq \frac{C_{s} (n) n Z_{0,2}^{n-1}}{2^{n-1}n!} \|v\|_{L^2}^{n-1}
\|v\|_{\F L^{s, p}_0}.
\label{pers1}
\end{align}

\noi
Then, a computation analogous to \eqref{NFWP6} with \eqref{pers1}
yields
\begin{align}
\| v  \|_{X^{s, p}(T)} 
& \leq  
\|\phi\|_{\F L_0^{s,p}} 
\Big\{
  A_{s} \Big(e^{Z_{0,2} \|\phi\|_{L^2 }} -  1 \Big) + 1\Big\}
+ A_{s} \|v\|_{X^{s, p}(T)} \Big( e^{ Z_{0,2} \|v\|_{X^{0, 2}(T)} } - 1 \Big) \notag\\
& \hphantom{XX}
+ \frac 14 T A_{s} 
 \| v\|_{X^{0, 2}(T)}^2
e^{Z_{0,2} \|v\|_{X^{0, 2}(T)}}
\|v\|_{X^{s, p}(T)}. 
\label{pers2}
\end{align}

\noi
Then, by choosing $R$ and $T$ sufficiently small
such that \[A_s(e^{Z_{0, 2}R} - 1) \leq \frac 14
\qquad \text{and}\qquad  %\end{align*}
T \leq  \frac{ 1}{A_{s}  R^2 e^{Z_{0, 2}R} }, 
\]

\noi
it follows from 
\eqref{pers2} that 
%with \eqref{NFWP1}, and \eqref{NFWP2} with $(s, p) = (0, 2)$, 
%we have 
\begin{align*}
\| v  \|_{X^{s, p}(T)} 
&\leq  
 \frac 52\|\phi\|_{\F L_0^{s,p}}. 
\end{align*}

\noi
Therefore, we conclude that 
 $v$ also lies  in $  L^\infty([0, T]; \F L_0^{s, p})$.

\end{remark}

\section{Small data global existence of smooth solutions}
\label{SEC:CH}

\subsection{Cole-Hopf transformation}\label{SUBSEC:CH}

In the non-periodic setting, 
we can use the Cole-Hopf transformation \eqref{gauge1} 
to transform smooth solutions to \eqref{DNLS1a} on $\R$ 
into
solutions of the linear Schr\"odinger equation.
%via the Cole-Hopf transformation \eqref{gauge1}, 
%smooth solutions to \eqref{DNLS1a} on $\R$ are transformed into 
%solutions of the linear Schr\"odinger equation.
In the periodic case, however, 
we need to make a suitable adjustment
so that a modified Cole-Hopf transformation 
converts smooth mean-zero solutions to \eqref{DNLS1a} on $\T$
into solutions of the linear Schr\"odinger equation.

Given a mean-zero function $\phi$ on $\T$, 
we follow the Cole-Hopf transformation \eqref{gauge1} on $\R$ and 
define a gauge transformation $\GG_0$ 
by  setting
\begin{equation} \label{CH1}
  \GG_0[\phi] : = e^{-\frac{i}{2} \J(\phi)},  
\end{equation}

\noi
where $\J(\phi)$ is the mean-zero primitive of $\phi$ given by 
\begin{equation*}
 \J(\phi)_k = \begin{cases} \frac{\phi_k}{ik}, & k\ne 0,    \\ 0, & k=0. \end{cases}  
%\label{CH2}
\end{equation*}

\noi
Note that $\GG_0[\phi]$ is a periodic function on $\T$, since $\J(\phi)$ is periodic.
Suppose that $u$ is a smooth mean-zero solution to dNLS \eqref{DNLS1a} on $\T$
and let $w (t) := \GG_0[u(t)]$.
Unfortunately,   such $w$ does not satisfies the linear Schr\"odinger equation.
Indeed, we have
\begin{align}
i \dt w + \dx^2 w = - \frac 14 \P_0[u^2] \cdot w, %e^{-\frac{i}{2} \J(\phi)}.
\label{CH2a}
\end{align}

\noi
where $\P_0[u^2]$ is defined in \eqref{mass}.

In view of \eqref{CH2a}, we define a new ``gauge'' transformation
$\GG$ on functions depending on both $x$ and $t$.
Given a smooth function $u(t, x)$ on $[0, T] \times \T$
such that $\int_\T u(t) dx  = 0$ for all $t \in [0, T]$, 
define a gauge transformation $\GG$ by 
\begin{equation} \label{CH3}
 W (t) = \GG[u](t) : = e^{ - \frac i4 \int_0^t \P_0[u^2(t')] dt'} e^{-\frac{i}{2} \J(u(t))},  
\end{equation}

\noi
In particular, we have  $\GG[u](0) = \GG_0(u(0))$.
Note that 
\begin{align}
\| \dx W(0) \|_{L^2} \leq \frac 12 \big\| e^{\frac 12 \Im \J(u(0))} \big\|_{L^\infty} \| u(0)\|_{L^2}
\leq 
\frac 12 e^{\frac 12 Z_{0, 2}\|u(0)\|_{L^2}} \| u(0)\|_{L^2}, 
\label{CH3a}
\end{align}

\noi
where $Z_{0, 2}$ is as in Definition \ref{DEF:Zsp}.

Suppose that $u(t, x)$ is a smooth mean-zero solution to \eqref{DNLS1a}
on an interval $[0, T]$.
Define $W$ by \eqref{CH3}.
Then, it is easy to see that $W $ satisfies the linear Schr\"odinger equation  on $(0, T) \times \T$:
\begin{align}
\dt W =i\dx^2 W.
\label{CH4}
 \end{align}

\noi
Given a smooth mean-zero initial condition  $\phi$, 
we can use the gauge transformation $\GG$ to 
 construct a smooth solution $u$ to \eqref{DNLS1a}.
 Indeed, set $W(0) =  \GG_0[\phi]$
and let $W(t)$ be the global solution to the linear Schr\"odinger equation \eqref{CH4}.
Then, applying the inverse gauge transformation: 
\begin{equation} \label{CH5}
u(t, x) = \GG^{-1}[W](t, x) := 2i \frac{\dx W(t, x)}{W(t, x)}, 
\end{equation}

\noi
we see that   $u$ is a solution to \eqref{DNLS1a}
as long as \eqref{CH5} makes sense.
Note that the smoothness of $\phi$ (and hence of $W(t)$)
was needed to consider the pointwise division in \eqref{CH5}.
Obviously, \eqref{CH5} makes sense as long as 
$W(t, x) \ne 0$.

\begin{figure}[h]

\begin{equation*} \xymatrix{W(0) = \GG_0[\phi]  \ar@{<-}[d]_{\eqref{CH1}, \, \eqref{CH3}} 
\ar@{->}[rrr]^{\dt W  = i \dx^2 W} 
& & &  W(t) \ar@{->}[d]^{\eqref{CH5}}\\ 
 u(0) = \phi \ar@2{->}[rrr]_{\text{dNLS}} & & & u(t) = \GG^{-1}[W](t)  }\end{equation*}
\caption{Relation between dNLS and the linear Schr{\"o}dinger equation.}
\label{Fig:1}
\end{figure}

Now, let us introduce a geometric view point.
The image
of a complex-valued periodic function is  a closed loop in $\C$. 
Thus, the inverse transformation \eqref{CH5} makes sense
at time $t$ if the loop $W(t)$ stays away from the origin in the complex plane.
Note that the trivial solution $u(t) \equiv 0$ to \eqref{DNLS1a}
is transformed into $W(t)  \equiv 1$.
Thus, a small initial condition  $u(0)$ to \eqref{DNLS1a}
are transformed into a small loop $W(0)$ around $1\in \C$.
In particular,  it is  away from the origin in the complex plane.
Then, we expect that the solution $W(t)$
to \eqref{CH4} remains as a small loop around $1 \in \C$ for all $t \in \R$, allowing us to apply the inverse
transformation \eqref{CH5}.
In the following, we make this intuition precise.
In particular, by assuming that a smooth mean-zero initial condition $u(0) = \phi$ is sufficiently small, 
we show that $W(t)$ stays away from the origin for all time, giving rise 
to a smooth global solution $u$ to \eqref{DNLS1a}.

\begin{remark} \label{REM:winding}
\rm 
Given a smooth periodic function   $u(t, x)$  on $[0, T] \times \T$
such that $\int_\T u(t) dx  = \mu \in \C$ for all $t \in [0, T]$, 
let $W(t)$ be the gauge transformation of $u(t)$ defined in \eqref{CH3}.
We need to make sure that $W(t)$ is a periodic function for each $t \in [0, T]$.
From the geometric point of view, 
$W(t)$ must be a closed loop in $\C$ for each $t \in [0, T]$.
In particular, the {\it index} (= winding number) of the loop $\g = W(t)$ at the origin
must be well defined.
By a direct computation, 
we have
\[ \Ind_{\g}(0) = \frac{1}{2\pi i} \int_{\g} \frac{dz}{z} 
= \frac{1}{2\pi i} \int_{\T} \frac{\dx W(t)}{W(t)}dx 
= - \frac{1}{4\pi} \int_\T u(t)dx = - \frac{\mu}{4\pi}, \]

\noi
where the third equality follows from \eqref{CH5}.
Hence, we must have $\mu \in 4 \pi \Z$.

In this paper, we only consider the mean-zero functions $u$, 
corresponding to the loop $W$ of index 0 at the origin.
It may be of interest to study well-posedness of \eqref{DNLS1a}
with $\int_\T u(0) dx = - 4\pi m$, $m \in \Z$, 
corresponding to the loop $W(0)$ of index $m$ at the origin.

\end{remark}

\begin{remark}\label{REM:linear}\rm

The normal form reduction performed in Section \ref{SEC:NF} corresponds
to the Taylor expansion of (the derivative of
the interaction representation of) the gauge transformation $W$ defined in \eqref{CH3}.
Indeed, we have 
\begin{align}
\dx S(-t) W (t) 
& =  e^{ - \frac i4 \int_0^t \P_0[u^2(t')] dt'}
\dx S(-t) e^{-\frac{i}{2} \J(u(t))} \notag \\  
& =  e^{ - \frac i4 \int_0^t \P_0[u^2(t')] dt'}
\dx  S(-t) \bigg[ \sum_{n=0}^\infty \frac{1}{n!} \bigg( -\frac i2 \J(u(t)) \bigg)^n   \bigg] \notag \\
& =  e^{ - \frac i4 \int_0^t \P_0[u^2(t')] dt'}
\dx  \sum_{n=1}^\infty \mathcal{M}^n(u(t)).
\label{Tay1}
\end{align}

\noi
Here,  %$\mathcal{M}^0(u(t)) = 1$ and 
$ \mathcal{M}^n(u(t))$, $n \geq 1$,  is given by 
\begin{align}
\big(\mathcal{M}^{n}(u(t))\big)_k 
&= \frac{1}{n!} \bigg(-\frac 12 \bigg)^{n} \sum_{|\bk|_n = k} e^{i\Phi_n(\bk)t} \prod_{j=1}^{n}
\frac{v_{k_j}(t)}{k_j}
= \frac{1}{2k}\N^n_k(v(t)), 
\label{Tay2}
\end{align}

\noi
where $v$ denotes the interaction representation of $u$ defined in \eqref{IR1}
and the last equality follows from \eqref{rec3} and \eqref{rec3b}.
Then, from \eqref{Tay1}, \eqref{Tay2} and \eqref{NF4}, indeed we have
\begin{align}
\big(\dx S(-t) W (t) \big)_k 
& =  \frac{i}{2}  e^{ - \frac i4 \int_0^t \P_0[u^2(t')] dt'}
 \sum_{n=1}^\infty \mathcal{N}^n_k(v(t))
 = \frac i 2 \Q_k(t)
\label{Tay3}
\end{align}

\noi
for $k \in \Z_0$.
Since $W$ is a solution to the linear Schr\"odinger equation, 
$\big(S(-t)W(t)\big)_k$ is conserved under the dynamics.
This fact can be also seen from \eqref{Tay3} and Remark \ref{REM:cons}:
\begin{align*}
\big( S(-t) W (t) \big)_k 
 = \frac 1 {2k} \Q_k(t)
 = \frac 1 {2k} \Q_k(0)  = W_k (0).
\end{align*}

\end{remark}

\subsection{Global existence of smooth solutions}
\label{SUBSEC:GWP1}

In this subsection, we prove global existence of smooth solutions
to \eqref{DNLS1a} with small mean-zero initial data. 
As mentioned in the previous subsection, 
the main goal is to 
make sure that the loop $W(t)$ in the complex plane does not intersect
the origin for any $t \in \R$.
The following simple lemma provides a sufficient condition.

\begin{lemma} \label{LEM:noint}
Suppose that $W^0 \in H^{\frac 12 + \eps } (\T)$, $\eps > 0$,  satisfies
\begin{equation}
  |W^0_0|
- \sum_{k \ne 0} |W^0_k| \geq \dl > 0, 
\label{noint1}
\end{equation}

\noi
for some $\dl > 0$.
Here,  $W^0_k$ is the $k$-th Fourier coefficient of $W^0$. 
Then, when viewed as a loop in the complex plane, the solution $W(t)$ 
to the linear Schr\"odinger equation \eqref{CH4} 
with $W|_{t = 0} = W^0$
never intersect the origin for any $t \in \R$.
Furthermore, we have 
\[ |W(t, x)| \geq \dl > 0.\]
\end{lemma}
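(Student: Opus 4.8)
The plan is to exploit the fact that the linear Schr\"odinger flow acts diagonally on Fourier coefficients with unimodular multipliers, so that the absolute value of each Fourier coefficient is frozen in time. Concretely, the solution to \eqref{CH4} with $W|_{t=0} = W^0$ is given on the Fourier side by $W_k(t) = e^{-ik^2 t}W^0_k$, so $|W_k(t)| = |W^0_k|$ for every $k \in \Z$ and every $t \in \R$. Thus the hypothesis \eqref{noint1} is preserved verbatim along the flow:
\[
|W_0(t)| - \sum_{k \ne 0} |W_k(t)| = |W^0_0| - \sum_{k \ne 0} |W^0_k| \geq \dl > 0 \qquad \text{for all } t \in \R.
\]

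First I would record that the assumption $W^0 \in H^{\frac12 + \eps}(\T)$ guarantees, via Cauchy--Schwarz,
\[
\sum_{k \ne 0} |W^0_k| \leq \Big( \sum_{k \ne 0} |k|^{-(1+2\eps)} \Big)^{\frac12} \Big( \sum_{k \ne 0} |k|^{1+2\eps} |W^0_k|^2 \Big)^{\frac12} < \infty,
\]
so the Fourier series of $W^0$ (and hence of $W(t)$, whose coefficients have the same moduli) converges absolutely and uniformly in $x$. In particular $W(t)$ is a genuine continuous periodic function, i.e.\ a well-defined loop in $\C$, for each $t$; this is what makes the geometric statement meaningful.

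Next, for any fixed $t \in \R$ and $x \in \T$, the reverse triangle inequality gives
\[
|W(t,x)| = \Big| \sum_{k \in \Z} W_k(t) e^{ikx} \Big| \geq |W_0(t)| - \sum_{k \ne 0} |W_k(t)| = |W^0_0| - \sum_{k \ne 0} |W^0_k| \geq \dl.
\]
Since this holds for all $(t,x)$, the loop $W(t)$ stays at distance at least $\dl$ from the origin for every $t$, which is exactly the assertion. There is no real obstacle here beyond checking absolute summability to justify the pointwise triangle inequality; the essential mechanism is simply the conservation of $|W_k|$ under the free Schr\"odinger evolution, which is what the modified Cole--Hopf transformation \eqref{CH3} was designed to exploit.
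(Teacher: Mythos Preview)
Your proof is correct and essentially identical to the paper's: both write $W(t,x)=\sum_k W^0_k e^{ikx}e^{-ik^2t}$, use the $H^{\frac12+\eps}$ hypothesis to justify absolute summability (the paper phrases this as Sobolev embedding plus unitarity, you spell out the Cauchy--Schwarz), and then apply the reverse triangle inequality to get $|W(t,x)|\ge |W^0_0|-\sum_{k\ne 0}|W^0_k|\ge\dl$. There is nothing to add.
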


\begin{proof}
By Sobolev embedding 
and the unitarity of the linear Schr{\"o}dinger flow, 
we have $W(t, x) \in C(\R\times \T)$.
Note that the solution $W(t)$ to the linear Schr{\"o}dinger equation  \eqref{CH4} is given by 
\[ W(t, x)  = \sum_{k \in \Z} W^0_k \, e^{ik x} \, e^{-ik^2 t}. \]

\noi
Therefore,  from \eqref{noint1}, we have
\[ |W(t, x)| = 
\bigg| W^0_0 + \sum_{k \ne 0} W^0_k  e^{ik x}  e^{-ik^2 t} \bigg| 
\geq |W^0_0| - \sum_{k \not= 0} |W^0_k| \geq \dl > 0, \]

\noi
for all $(t, x) \in \R\times \T$.
\end{proof}

\begin{remark}\rm
The condition \eqref{noint1} is sharp.
Consider 
\[ W^0(x) = - 2 \zeta(2) + \sum_{k \ne 0} \frac{1}{k^2}e^{ikx},\]

\noi
where $\zeta(\tau) = \sum_{k = 1}^\infty k^{-\tau}$ is the Riemann zeta function.
Then, clearly \eqref{noint1} is violated.
Moreover, we have $W^0(0) = 0$.

\end{remark}

As a corollary of Lemma \ref{LEM:noint}, we obtain 
the following a priori bound on the $L^2$-norm
of smooth solutions to \eqref{DNLS1a}.

\begin{lemma} \label{COR:L2}

Suppose that $u$ is a smooth global solution 
to \eqref{DNLS1a} with 
a mean-zero initial condition:  $\int_\T u(0) \, dx = 0$.
Let $W(t) = \GG[u](t)$ be the gauge transformation defined in \eqref{CH3}.
If $W^0 = W(0)$ satisfies \eqref{noint1} for some $\dl > 0$, then 
we have the following a priori bound:
\begin{align*}
\| u (t) \|_{L^2(\T)} 
\leq 
\frac{1}{\dl}
e^{\frac 12 Z_{0, 2}\|u(0)\|_{L^2}} \| u(0)\|_{L^2}
\end{align*}

\noi
for all $t \in \R$.

\end{lemma}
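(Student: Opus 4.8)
The plan is to run the diagram in Figure \ref{Fig:1} in reverse: express $u$ via the inverse gauge transformation and control it using the pointwise lower bound on $|W|$ furnished by Lemma \ref{LEM:noint}, together with the conservation of the $\dot H^1$-norm of $W$ under the linear Schr\"odinger flow.

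First I would set $W(t) = \GG[u](t)$ as in \eqref{CH3}. Since $u$ is a smooth mean-zero solution to \eqref{DNLS1a}, the discussion in Subsection \ref{SUBSEC:CH} shows that $W$ solves the linear Schr\"odinger equation \eqref{CH4}; moreover, because $\dx \J(u) = u$ for a mean-zero function $u$, a direct computation gives $\dx W(t) = -\tfrac i2\, u(t)\, W(t)$, so that $u = \GG^{-1}[W] = 2i\, \dx W/W$ as in \eqref{CH5}. In particular,
\[
|u(t,x)| = 2\,\frac{|\dx W(t,x)|}{|W(t,x)|}
\]
for all $(t,x) \in \R \times \T$. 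By hypothesis $W^0 = W(0)$ satisfies \eqref{noint1}, so Lemma \ref{LEM:noint} yields $|W(t,x)| \geq \dl > 0$ for all $(t,x)$, whence
\[
\|u(t)\|_{L^2(\T)} \leq \frac{2}{\dl}\, \|\dx W(t)\|_{L^2(\T)}.
\]

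Next I would use that $\dx$ commutes with the linear propagator $S(t) = e^{it\dx^2}$, which is unitary on $L^2(\T)$; hence $\|\dx W(t)\|_{L^2} = \|\dx W(0)\|_{L^2}$ for every $t \in \R$. Inserting the bound \eqref{CH3a} on $\|\dx W(0)\|_{L^2}$, we conclude
\[
\|u(t)\|_{L^2(\T)} \leq \frac{2}{\dl}\,\|\dx W(0)\|_{L^2(\T)} \leq \frac{2}{\dl}\cdot \frac 12\, e^{\frac 12 Z_{0,2}\|u(0)\|_{L^2}}\|u(0)\|_{L^2} = \frac{1}{\dl}\, e^{\frac 12 Z_{0,2}\|u(0)\|_{L^2}}\|u(0)\|_{L^2},
\]
which is the asserted a priori estimate. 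There is no serious obstacle in this argument: the entire estimate rests on the observation that $\dx$ passes through the linear flow (so that the relevant norm of $W$ is a conserved quantity), and the only point requiring minor care is verifying that $\GG^{-1}$ is a genuine left inverse of $\GG$ on smooth mean-zero functions, i.e. the identity $u = 2i\,\dx W/W$, which reduces to $\dx\J(u) = u$ for mean-zero $u$.
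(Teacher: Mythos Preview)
Your proof is correct and follows essentially the same route as the paper: invoke Lemma \ref{LEM:noint} for the pointwise lower bound $|W(t,x)|\geq\dl$, use the inverse gauge relation \eqref{CH5} to bound $\|u(t)\|_{L^2}\leq \tfrac{2}{\dl}\|\dx W(t)\|_{L^2}$, apply conservation of $\|\dx W(t)\|_{L^2}$ under the linear flow, and finish with \eqref{CH3a}. The only difference is that you spell out a few steps (the identity $\dx\J(u)=u$ and the commutation of $\dx$ with $S(t)$) that the paper leaves implicit.
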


\begin{proof}
By Lemma \ref{LEM:noint}, 
 we have
 $|W(t, x)| \geq \dl > 0$
for all $(t, x) \in \R\times \T$.
Then, from \eqref{CH5}, the unitary of the linear Schr\"odinger flow on $\dot H^1$,  and \eqref{CH3a}, we have 
\begin{align*}
\| u (t) \|_{L^2(\T)} \leq \frac{2}{\dl} \| \dx W(t) \|_{L^2(\T)}
= \frac{2}{\dl} \| \dx W(0) \|_{L^2(\T)}
\leq  \frac{1}{\dl} 
e^{\frac 12 Z_{0, 2}\|u(0)\|_{L^2}} \| u(0)\|_{L^2}.
\end{align*}
\end{proof}

Given a smooth mean-zero function $\phi$ on $\T$, 
let $W$ be the solution to \eqref{CH4}
with $W(0) = W^0: = \GG_0[\phi]$.
In the following proposition, 
 we transfer the condition \eqref{noint1} on $W^0$ 
to a condition on $\phi$
guaranteeing that the loop  $W(t)$ does not intersect the origin for any $t \in \R$. 
This allows us to apply the inverse transformation \eqref{CH5}
and construct a smooth global solution $u (t) = \GG^{-1}[W](t)$
to \eqref{DNLS1a}.

\begin{proposition} \label{PROP:SGWP1}
Suppose that  $(s, p)$ satisfy  \textup{(i)} $s>\frac 12 -\frac{1}{p}, \  p> 2$ or 
\textup{(ii)} $s\geq 0, \ p=2$. 
Let $\phi$ be a smooth function on $\T$,  satisfying $\int_\T \phi \, dx = 0$.
Define $M = M(\phi)$ by 
\begin{align}
 M := \sup_{x\in\T} |\J(\phi)(x)|. 
\label{SGWP1}
\end{align}

\noi
 If $M < \pi$ and
\begin{align}
 e^{\frac{1}{2}Z_{s,p} \|\phi\|_{\F L^{s,p}_0}} < 2 e^{-\frac M 2} \cos\bigg(\frac{M}{2}\bigg), 
\label{SGWP2}
\end{align}

\noi
then there exists a smooth global solution $u$ to dNLS \eqref{DNLS1a} with 
$u|_{t = 0} = \phi$.
Moreover, there exists $C(\phi) > 0$ such that we have
\begin{align}
\| u (t) \|_{L^2(\T)} 
\leq C(M, \|\phi\|_{\F L^{s, p}_0}) < \infty, 
\label{SGWP2a}
\end{align}

\noi
for all $t \in \R$.

\end{proposition}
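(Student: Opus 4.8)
The plan is to realize the solution via the modified Cole--Hopf transformation of Subsection~\ref{SUBSEC:CH}. Set $W^0 := \GG_0[\phi] = e^{-\frac i2\J(\phi)}$; since $\phi$ is smooth and mean-zero, $\J(\phi)$ is a smooth periodic function, so $W^0 \in C^\infty(\T) \subset H^{\frac12+\eps}(\T)$. Let $W(t) = S(t)W^0$ be the solution of the linear Schr\"odinger equation~\eqref{CH4} with $W|_{t=0} = W^0$; it is smooth on $\R\times\T$ because $S(t) = e^{it\dx^2}$ is a Fourier multiplier of modulus $1$ and hence preserves $C^\infty(\T)$. The whole scheme will go through once we know that the loop $W(t)$ avoids the origin for all $t$, and for this I would apply Lemma~\ref{LEM:noint}. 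Concretely, the goal is to show that the hypotheses $M<\pi$ and \eqref{SGWP2} force the condition~\eqref{noint1} to hold for $W^0$ with the explicit constant
\[
\dl := 2\,e^{-M/2}\cos\Big(\tfrac M2\Big) - e^{\frac12 Z_{s,p}\|\phi\|_{\F L^{s,p}_0}},
\]
which is positive precisely by \eqref{SGWP2}. Granting this, Lemma~\ref{LEM:noint} gives $|W(t,x)| \geq \dl$ on $\R\times\T$, so $u(t,x) := \GG^{-1}[W](t,x) = 2i\,\dx W(t,x)/W(t,x)$ is well-defined and smooth; by the computations in Subsection~\ref{SUBSEC:CH} (inverting \eqref{CH3}) $u$ solves dNLS~\eqref{DNLS1a}, and since $\dx W^0 = -\frac i2(\dx\J(\phi))W^0$ we get $u|_{t=0} = 2i\,\dx W^0/W^0 = \dx\J(\phi) = \phi$.

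The heart of the argument is the verification of~\eqref{noint1}. For the nonzero frequencies: Young's convolution inequality (the $\ell^1(\Z)$ of Fourier coefficients is submultiplicative) applied to the Taylor expansion $e^{-\frac i2\J(\phi)} = \sum_{n\geq0}\frac1{n!}\big(-\frac i2\J(\phi)\big)^n$ gives $\sum_{k\in\Z}|W^0_k| \leq \exp\big(\frac12\sum_{k\neq0}|\J(\phi)_k|\big)$, while H\"older's inequality in $\ell^p$ and $\J(\phi)_k = \phi_k/(ik)$ yield
\[
\sum_{k\neq0}|\J(\phi)_k| = \sum_{k\neq0}|k|^{-(s+1)}\,|k|^s|\phi_k| \leq Z_{s,p}\,\|\phi\|_{\F L^{s,p}_0},
\]
with $Z_{s,p}<\infty$ by the assumptions on $(s,p)$ (Definition~\ref{DEF:Zsp}). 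Hence $\sum_{k\neq0}|W^0_k| = \sum_{k\in\Z}|W^0_k| - |W^0_0| \leq e^{\frac12 Z_{s,p}\|\phi\|_{\F L^{s,p}_0}} - |W^0_0|$. For the zeroth mode: writing $\J(\phi) = g_1 + ig_2$ with $g_1,g_2$ real-valued, we have $|g_1|,|g_2| \leq |\J(\phi)| \leq M < \pi$ pointwise, so
\[
\Re W^0_0 = \frac1{2\pi}\int_\T e^{\frac12 g_2(x)}\cos\Big(\tfrac12 g_1(x)\Big)\,dx \;\geq\; e^{-M/2}\cos\Big(\tfrac M2\Big) > 0 ,
\]
whence $|W^0_0| \geq e^{-M/2}\cos(M/2)$. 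Combining the two estimates,
\[
|W^0_0| - \sum_{k\neq0}|W^0_k| \;=\; 2|W^0_0| - \sum_{k\in\Z}|W^0_k| \;\geq\; 2\,e^{-M/2}\cos\Big(\tfrac M2\Big) - e^{\frac12 Z_{s,p}\|\phi\|_{\F L^{s,p}_0}} = \dl .
\]
This quantitative balancing --- the geometric lower bound on $W^0_0$ (which is exactly where the condition $M<\pi$ is used) against the Wiener-algebra control of the remaining Fourier modes --- is the main obstacle; the rest is routine.

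For the a priori bound~\eqref{SGWP2a} I would invoke Lemma~\ref{COR:L2}, which applies since $u$ is a smooth global solution, $\int_\T u(0)\,dx = \int_\T\phi\,dx = 0$, and $W(0) = W^0$ satisfies~\eqref{noint1} with the $\dl$ above: it yields $\|u(t)\|_{L^2(\T)} \leq \dl^{-1} e^{\frac12 Z_{0,2}\|\phi\|_{L^2}}\|\phi\|_{L^2}$ for all $t\in\R$. Here $\|\phi\|_{L^2}$ is finite because $\F L^{s,p}_0(\T)\hookrightarrow L^2(\T)$ under the stated conditions on $(s,p)$ --- by \eqref{embed} when $p>2$, and trivially (since $|k|\geq1$ on $\Z_0$) when $p=2$, $s\geq0$. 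As $\dl$ depends only on $M$ and $\|\phi\|_{\F L^{s,p}_0}$, so does the resulting constant $C(M,\|\phi\|_{\F L^{s,p}_0})$, which completes the proof.
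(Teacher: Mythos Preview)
Your proof is correct and follows essentially the same route as the paper's: verify \eqref{noint1} for $W^0=\GG_0[\phi]$ by bounding $\sum_k|W^0_k|$ via the Wiener-algebra/Taylor expansion and $|W^0_0|$ from below via the polar form of $W^0$, then invoke Lemma~\ref{LEM:noint} and Lemma~\ref{COR:L2}. The only cosmetic difference is that the paper obtains $|W^0_0|\ge e^{-M/2}\cos(M/2)$ by applying the Mean Value Theorem to $\int_\T r(x)\cos\theta(x)\,dx$, whereas you use the pointwise lower bound $\Re W^0(x)\ge e^{-M/2}\cos(M/2)$ directly; both yield the same estimate.
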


\begin{remark} \rm
This type of small ``disturbance'' condition also appears in the corresponding problem on $\R$. 
In \cite{Stef}, Stefanov proved local well-posedness of 
the quadratic dNLS \eqref{DNLS1a} in  $H^1(\R)$, 
assuming that
 $\sup_{x\in\R} \big|\int_{-\infty}^x \phi(y)\,dy\big|$ is sufficiently small. 
 This is analogous to the condition $M < \pi$ in Proposition \ref{PROP:SGWP1}.
 \end{remark}

\begin{proof}

Let $W^0= \GG_0[\phi]$.
Then, we have 
\begin{align}
W^0(x) &= e^{-\frac{i}{2}\J(\phi)(x)} 
= \sum_{n=0}^\infty \frac{1}{n!} \bigg(\frac{-i}{2}\bigg)^n[\J(\phi)(x)]^n \notag \\
&= 
\sum_{n=0}^\infty \frac{1}{n!} \bigg(\frac{-i}{2}\bigg)^n
 \sum_{k \in \Z} \sum_{|\bk|_n = k} \prod_{j=1}^n \J(\phi)_{k_j} e^{ikx} 
=: \sum_{k \in \Z} W^0_k \, e^{ikx},
\label{SGWP3}
\end{align}

\noi
where
\[ W^0_k = 
\sum_{n=0}^\infty \frac{1}{n!} \bigg(\frac{-i}{2}\bigg)^n
 \sum_{|\bk|_n = k} \prod_{j=1}^n \J(\phi)_{k_j}. \]

\noi
Then, by Young's inequality and 
\begin{align}
\|\J(\phi)_k\|_{\ell^1}
= \sum_{k \not= 0} \frac{|\phi_k|}{|k|} 
\leq Z_{s,p}  \|\phi\|_{\F L_0^{s,p}},
\label{SGWP3a}
\end{align} 

\noi
we have
\begin{align}
\sum_{k \in \Z} |W^0_k| &\leq \sum_{n=0}^\infty \frac{1}{2^n n!} 
\sum_{k \in \Z} \sum_{|\bk|_n = k} \prod_{j=1}^n |\J(\phi)_{k_j}| 
\leq \sum_{n=0}^\infty \frac{1}{2^n n!} \| \J(\phi)_k \|^n_{\ell^1} \notag \\
&\leq \sum_{n=0}^\infty \frac{1}{n!} 
\bigg(\frac{Z_{s,p} \|\phi\|_{\F L_0^{s,p}}}{2}\bigg)^n = e^{\frac{1}{2}Z_{s,p} \|\phi\|_{\F L_0^{s,p}}}.
\label{SGWP4}
\end{align}

\noi
Note that the absolute convergence of 
the summations in $n$ and $k$ of \eqref{SGWP4}
justifies the interchange of the summations in \eqref{SGWP3}.

On the other hand, let
 $r(x) := \exp\big\{\Re(-\frac{i}{2}\J(\phi)(x)) \big\}$ 
 and $\theta(x) := \Im(-\frac{i}{2}\J(\phi)(x))$.
Then, by \eqref{SGWP1}, 
 we have
\[ W^0(x) = r(x)e^{i\theta(x)}, \qquad r(x) \geq e^{-\frac M2},  \qquad \text{and} \quad |\theta(x)| \leq \frac M 2 \]

\noi
for all $x \in \T$.
Hence, by Mean Value Theorem, % (for integrals), 
there exists $x_0 \in \T $ such that  
\begin{align}
|W^0_0| &= \frac{1}{2\pi}\bigg| \int_{\T} W^0 dx\bigg| 
= \frac{1}{2\pi}\bigg| \int_{\T} r(x)e^{i\theta(x)} dx \bigg| 
\geq \frac{1}{2\pi}\bigg| \int_{\T} r(x)\cos\theta(x) dx \bigg| \notag \\
&= 
\big| r(x_0) \cos\theta(x_0) \big| \notag\\
&\geq e^{-\frac M2} \cos \bigg(\frac M2\bigg).
\label{SGWP5}
\end{align}

\noi
The condition $0 \leq  M  < \pi$ guarantees that the right-hand side of \eqref{SGWP5}
is positive.

In view of 
\eqref{SGWP2}, \eqref{SGWP4} and \eqref{SGWP5}, 
we see that 
the condition \eqref{noint1} in Lemma \ref{LEM:noint} is satisfied for some $\dl > 0$.
Hence, letting $W$ denote the smooth global solution 
to \eqref{CH4} with $W|_{t = 0} = W^0$, 
Lemma \ref{LEM:noint} allows us to apply the inverse transformation \eqref{CH5}
and construct a smooth global  solution $u$ to \eqref{DNLS1a}
with $u|_{t = 0} = \phi$.
Lastly, the global $L^2$-bound \eqref{SGWP2a}
follows from Lemma \ref{COR:L2}.
\end{proof}

We now transfer the conditions in Proposition \ref{PROP:SGWP1} 
to a smallness condition on the $\F L^{s, p}_0$-norm of smooth mean-zero initial data.

\begin{corollary} \label{COR:SGWP2}
Suppose that  $(s, p)$ satisfy  \textup{(i)} $s>\frac 12 -\frac{1}{p}, \  p> 2$ or 
\textup{(ii)} $s\geq 0, \ p=2$. 
Then, there exists  $ \dl_2>0$ such that
if a smooth mean-zero function $\phi$ on $\T$
satisfies 
\[  \|\phi \|_{\F L_0^{s,p}(\T)} \leq \dl_2, \]

\noi
then 
there exists a smooth global  solution $u$ to  dNLS \eqref{DNLS1a}
with $u|_{t = 0} = \phi$, satisfying \eqref{SGWP2a}.

\end{corollary}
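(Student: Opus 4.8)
The plan is to show that the smallness of $\|\phi\|_{\F L^{s,p}_0(\T)}$ automatically forces both hypotheses of Proposition \ref{PROP:SGWP1} --- namely $M < \pi$ and the inequality \eqref{SGWP2} --- and then simply invoke that proposition.

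First I would control the quantity $M = M(\phi) = \sup_{x\in\T}|\J(\phi)(x)|$ from \eqref{SGWP1} by the $\ell^1$-norm of the Fourier coefficients of $\J(\phi)$. Using the elementary bound $\sup_{x\in\T}|f(x)| \leq \|\ft f\|_{\ell^1}$ together with \eqref{SGWP3a}, we obtain
\[
M \leq \|\J(\phi)_k\|_{\ell^1} \leq Z_{s,p}\,\|\phi\|_{\F L^{s,p}_0}.
\]
In particular, taking $\dl_2 \leq \pi/(2Z_{s,p})$ already guarantees $M \leq Z_{s,p}\dl_2 < \pi$, which is the first required condition.

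Next I would verify \eqref{SGWP2}. Thanks to the bound above, both sides of \eqref{SGWP2} are governed by the single quantity $\|\phi\|_{\F L^{s,p}_0}$: as $\|\phi\|_{\F L^{s,p}_0}\to 0$ we have $M\to 0$, so the left-hand side $e^{\frac12 Z_{s,p}\|\phi\|_{\F L^{s,p}_0}}$ tends to $1$ while the right-hand side $2e^{-M/2}\cos(M/2)$ tends to $2$. Concretely, one may first shrink $\dl_2$ (below $\pi/(2Z_{s,p})$) so that $M\leq Z_{s,p}\dl_2$ is small enough that $2e^{-M/2}\cos(M/2)\geq \tfrac32$, and then shrink $\dl_2$ further so that $e^{\frac12 Z_{s,p}\dl_2} < \tfrac32$; for such $\dl_2$, any smooth mean-zero $\phi$ with $\|\phi\|_{\F L^{s,p}_0}\leq\dl_2$ satisfies \eqref{SGWP2}.

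With both hypotheses of Proposition \ref{PROP:SGWP1} now in force, that proposition yields a smooth global solution $u$ to \eqref{DNLS1a} with $u|_{t=0}=\phi$ and the $L^2$-bound \eqref{SGWP2a}, which completes the argument. Since every step is an elementary estimate, I do not anticipate any genuine obstacle; the only point demanding a little care is the quantitative choice of $\dl_2$, which is handled uniformly by the monotone dependence of both sides of \eqref{SGWP2} on $\|\phi\|_{\F L^{s,p}_0}$ through the inequality $M\leq Z_{s,p}\|\phi\|_{\F L^{s,p}_0}$.
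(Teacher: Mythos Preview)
Your proposal is correct and follows essentially the same approach as the paper: bound $M$ via \eqref{SGWP3a} by $Z_{s,p}\|\phi\|_{\F L^{s,p}_0}$, then choose $\dl_2$ small so that both hypotheses of Proposition~\ref{PROP:SGWP1} hold. The only cosmetic difference is that the paper pins down a slightly sharper threshold for $\dl_2$ by solving $2e^{-2\alpha}\cos\alpha=1$ for $\alpha\in(0,\pi/2)$ and requiring $Z_{s,p}\dl_2<2\alpha$, whereas you use the intermediate value $3/2$; both arguments are equivalent in substance.
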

\begin{proof}

From \eqref{SGWP3a}, we have 
\begin{align}
M = \sup_{x\in\T} |\J(\phi) (x)| \leq Z_{s,p}\|\phi\|_{\F L_0^{s,p}} \leq  Z_{s,p} \, \dl_2.
\label{SGWP6}
\end{align}

\noi
Hence,  the first condition $M < \pi$ in Proposition \ref{PROP:SGWP1}
is satisfied if $Z_{s,p} \, \dl_2 < \pi$.

Let $f(x) = 2e^{-2x} \cos x$.
Then, 
there exists unique $\al \in \big(0, \frac \pi 2\big)$
such that 
$f(\al) = 1$ since
$f$ is strictly decreasing on $[0, \frac \pi 2]$, $f(0) = 2$, and $f\big(\frac \pi 2\big) = 0$.
Now, choose $\dl_2>0$ sufficiently small such that 
$Z_{s, p} \dl_2 <2 \al$.
Then, we have 
\begin{align*}
 e^{\frac{1}{2}Z_{s,p} \|\phi\|_{\F L^{s,p}_0}} 
\leq  e^{\frac 12 Z_{s, p} \dl_2 } 
< e^{\al}
= 2 e^{-\alpha} \cos\alpha < 2 e^{-\frac M 2} \cos\big(\tfrac M2\big), 
%\label{SGWP7}
\end{align*}

\noi
where the last inequality follows from \eqref{SGWP6} and the fact that
$g(x) = e^{-x}\cos x$ is strictly decreasing on $x \in \big[0,\frac \pi 2\big]$.
This shows that 
the second condition \eqref{SGWP2}
in Proposition \ref{PROP:SGWP1} also holds.
\end{proof}

\section{Proof of Theorem \ref{THM:main}} \label{SEC:GWP}
We are now ready to present the proof of Theorem \ref{THM:main}.
The main ingredients are 
the normal form reductions 
and small data global existence of smooth solutions via the gauge transformation 
discussed 
in Sections \ref{SEC:NF}
and \ref{SEC:CH}.

Given small $\dl_0 >0$ (to be chosen later), 
fix  $\phi \in \F L_0^{s,p}$  such that  $ \|\phi \|_{\F L_0^{s,p}} \leq\delta_0 $.
For $j \in \mathbb{N}$, 
let $\phi^{(j)} = \P_{\leq j}\phi$, 
where $\P_{\leq j}$ is 
the Dirichlet projection onto the frequencies $\{|k| \leq j\}$.
Note that we have  $\{\phi^{(j)}\}_{j \in \mathbb{N}} \subset C^\infty(\T)$, 
$\int_\T \phi^{(j)}dx = 0$,  and 
$ \|\phi^{(j)} \|_{\F L_0^{s,p}} \leq\delta_0 $
for all $j \in \mathbb{N}$.
Then, by Corollary \ref{COR:SGWP2}, 
there exist global smooth solutions $u^{(j)}$ to \eqref{DNLS1a}
with $u^{(j)}|_{t = 0} = \phi^{(j)}$, 
as long as $\dl_0 \leq \dl_2$.

We first consider the case $(s, p) = (0, 2)$.
Letting $W^{(j), 0} = \GG_0 [ \phi^{(j)}]$, 
it follows from 
\eqref{SGWP4} and  \eqref{SGWP5}
with \eqref{SGWP6}
that 
\begin{align*}
|W_0^{(j), 0}| - \sum_{k \ne 0} |W^{(j), 0}_k| 
& = 2 |W_0^{(j), 0}| - \sum_{k \in \Z} |W^{(j), 0}_k| \\
&\geq 
2 e^{-\frac 12  Z_{0, 2} \dl_0 } \cos \bigg(\frac {Z_{0,2}\dl_0} 2\bigg)
-  e^{\frac{1}{2}Z_{0,2} \dl_0 }
=: A(\dl_0) > 0
\end{align*}

\noi
for all $j \in \mathbb{N}$.
From the continuity of $A(\dl_0)$
and 
$A(0) = 1$, 
we have $A(\dl_0) \geq \frac 12$ for all sufficiently small $\dl_0 > 0$.
Hence, 
it follows from Lemma \ref{COR:L2}
that by choosing $\dl_0 > 0$ sufficiently small, 
we have 
\begin{align}
\| u^{(j)} (t) \|_{L^2} 
\leq 
\| u (t) \|_{L^2(\T)} 
\leq 
2
e^{\frac 12 Z_{0, 2}\dl_0 } \dl_0 < \dl_1, 
\label{GWP1}
\end{align}

\noi
for all $t \in \R$
and all $j \in \mathbb{N}$, 
where $\dl_1$ is as in Proposition \ref{PROP:NFGWP}.

Fix  $T = T(\dl_1)$, where $T$ is the local existence time  in Proposition \ref{PROP:NFGWP}.
Then, from a slight modification of \eqref{NFWP4}
applied to $v^{(j)}(t) = S(-t) u^{(j)}(t)$, 
we obtain 
\begin{align}
\| u^{(j)} - u^{(\l)} \|_{C([0, T]; L^2)}
\leq 2 \| \phi^{(j)} - \phi^{(\l)} \|_{ L^2}
\label{GWP1a}
\end{align}
	
\noi
for all $j, \l \in \mathbb{N}$.
In view of \eqref{GWP1}, 
we can iterate the argument on intervals $[i T, (i+1)T]$, $i = 1, 2, \dots$,   
and obtain 
\begin{align}
\| u^{(j)} - u^{(\l)} \|_{C([0, \tau]; L^2)}
\leq 2^{[\frac{\tau}{T}] +1}  \| \phi^{(j)} - \phi^{(\l)} \|_{ L^2}
\label{GWP2}
\end{align}

\noi
for any $\tau > 0$.
Since $\phi^{(j)}$ converges in $L^2$, 
it follows from \eqref{GWP2}
that 
$u^{(j)}$ converges to some $u$ in $C(\R; L^2)$ endowed 
with the compact-open topology, 
i.e.~ 
$u^{(j)}$ converges to some $u(t)$ in $L^2$, 
uniformly on each compact time interval.
In particular, we have
\[ \dt u - i \dx^2 u - u \dx u = \lim_{j \to \infty}\Big\{\dt u^{(j)} - i \dx^2 u^{(j)} -\tfrac 12 \dx \big[(u^{(j)})^2\big]\Big\}= 0 \]

\noi
in the sense of distributions.
Therefore, 
$u$ is a distributional solution to \eqref{DNLS1a} with $u|_{ t= 0} = \phi$.
The continuous dependence
follows from a slight modification of \eqref{NFWP4}.
See \eqref{GWP1a} above.

Next, we discuss the issue of uniqueness.
Given   $\phi \in L^2$  such that  $ \|\phi \|_{L^2} <\delta_0 $, 
let $u \in C(\R; L^2)$
be solutions to \eqref{DNLS1a}.
Then, it follows 
from \eqref{DNLS3} that 
$u_k \in C^1(\R; \C)$ 
for each $k \in \Z_0$.
Letting $v(t) = S(-t) u(t)$,
we also have 
$v_k \in C^1(\R; \C)$ for each $k \in \Z_0$.
Moreover, 
$\{v_k\}_{k \in \Z_0}$ satisfies \eqref{DNLS4}
for each $k \in \Z_0$.
We now need to verify the differentiation by parts step \eqref{I2a}.
Recall the following lemma from \cite{GKO}.

\begin{lemma}\label{LEM:conv}
Let $\{f_n\}_{n\in \mathbb{N}}$ be a sequence in $\mathcal{D}'_t$.
Suppose that $\sum_n  f_n$ converges (absolutely) in $\mathcal{D}'_t$.
Then, $\sum_n\dt f_n$ converges (absolutely) in $\mathcal{D}'_t$
and $\dt (\sum_n f_n ) = \sum_n \dt f_n$.
\end{lemma}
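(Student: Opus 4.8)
The plan is to reduce the statement to the defining duality property of the distributional derivative, namely $\langle \dt g, \psi\rangle = -\langle g, \dt\psi\rangle$ for $g \in \mathcal{D}'_t$ and $\psi$ a test function, together with the observation that convergence in $\mathcal{D}'_t$ means nothing more than convergence of the numerical pairings against each fixed test function, and that $\dt\psi$ is again a test function whenever $\psi$ is.

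First I would fix an arbitrary test function $\psi$ and write $f := \sum_n f_n \in \mathcal{D}'_t$ for the limit supplied by the hypothesis. Since $\dt\psi$ is a test function, the numerical series $\sum_n \langle f_n, \dt\psi\rangle$ converges, with sum $\langle f, \dt\psi\rangle$. Then for the partial sums $S_N := \sum_{n=1}^N f_n$ I would use linearity of $\dt$ on $\mathcal{D}'_t$ to compute $\langle \dt S_N, \psi\rangle = -\langle S_N, \dt\psi\rangle \to -\langle f, \dt\psi\rangle = \langle \dt f, \psi\rangle$. As $\psi$ ranges over all test functions, this shows simultaneously that $\sum_n \dt f_n$ converges in $\mathcal{D}'_t$ and that its limit equals $\dt f = \dt\bigl(\sum_n f_n\bigr)$, which is the asserted identity.

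For the absolute-convergence clause I would use that absolute convergence of $\sum_n f_n$ in $\mathcal{D}'_t$ means $\sum_n |\langle f_n, \psi\rangle| < \infty$ for every test function $\psi$. Applying this with $\psi$ replaced by $\dt\psi$ and using the identity $|\langle \dt f_n, \psi\rangle| = |\langle f_n, \dt\psi\rangle|$ yields $\sum_n |\langle \dt f_n, \psi\rangle| < \infty$ for every $\psi$, which is precisely the absolute convergence of $\sum_n \dt f_n$ in $\mathcal{D}'_t$.

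I do not anticipate a genuine obstacle here: the lemma is just the (sequential, weak-$*$) continuity of the map $\dt : \mathcal{D}'_t \to \mathcal{D}'_t$ applied termwise to a convergent series, and the argument is a two-line duality computation. The one point deserving a moment's care is to pin down the meaning of "(absolute) convergence in $\mathcal{D}'_t$" as convergence tested against each test function separately, so that the substitution of $\dt\psi$ for $\psi$ is legitimate; this is exactly the form in which the lemma will be invoked to justify the differentiation-by-parts manipulations such as \eqref{I2a}.
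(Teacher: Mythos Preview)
Your proposal is correct and entirely standard: the lemma is nothing more than the weak-$*$ sequential continuity of $\dt$ on $\mathcal{D}'_t$, and your duality computation (pairing against $\psi$, then replacing $\psi$ by $\dt\psi$) is exactly the right way to see both the convergence and the absolute-convergence clauses.

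For comparison, the paper does not actually prove this lemma; it simply recalls it from \cite{GKO} and uses it as a black box to justify the differentiation-by-parts step \eqref{I2a}. So there is no ``paper's own proof'' to compare against here, but your argument is the natural one and would serve perfectly well in its place.
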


By Young's inequality, we have
\begin{align*}
\sum_{|\bk|_n = k} \bigg|%\Phi_n(\bk)
e^{i \Phi_n(\bk) t} \prod_{j=1}^n \frac{v_{k_j}(t)}{k_j} \bigg|  
\leq \bigg\| \frac{v_{k}(t)}{k}\bigg\|_{\l^2}^{2}
\bigg\| \frac{v_{k}(t)}{k}\bigg\|_{\l^1}^{n-2} 
%\| v_k(t)\|_{\l^2}^2\bigg\| \frac{v_{k}(t)}{k}\bigg\|_{\l^1}^{n-2}
\les  \| v_k(t)\|_{\l^2}^n.
\end{align*}

\noi
Hence, Lemma \ref{LEM:conv} justifies the computation in \eqref{I2a}.
Note that in transition from \eqref{I2a} to \eqref{I3}, 
we used the product rule:
$ \dt (v_{k_1}v_{k_2}) = \dt v_{k_1}v_{k_2}+ v_{k_1}\dt v_{k_2}$.
This is justified by the fact that 
$v_k \in C^1(\R; \C)$ for each $k \in \Z_0$.
Proceeding in a similar manner, 
we can justify 
all the subsequent steps in the normal form reductions.

Noting that Part 1 of Proposition \ref{PROP:smooth}
relies only on the $L^2$-regularity of $v(t)$, 
we see that \eqref{smooth1a} holds.
Moreover, 
the normal form equation \eqref{NF2} holds for each frequency $k \in \Z_0$
on the Fourier side.
Namely, we have 
\begin{equation} 
v_k(t) = \phi_k + \sum_{n=2}^\infty  \N^n_k(v(t)) - \sum_{n=2}^\infty\N_k^n(\phi) 
+ \int_0^t  \sum_{n=2}^\infty \RR_k^n(v(t')) dt'
\label{GWP3}
\end{equation}

\noi
for each $k \in \Z_0$.
Then, the uniqueness part of Theorem \ref{THM:main} follows
from the corresponding uniqueness statement for \eqref{GWP3}.
See Remark \ref{REM:uniq}.

The general case $(s, p)$ with 
  \textup{(i)} $s>\frac 12 -\frac{1}{p}, \  p> 2$ or 
\textup{(ii)} $s\geq 0, \ p=2$
follows from the persistence of regularity for the normal form equation \eqref{NF2} and \eqref{embed};
see Remark \ref{REM:pers}. This completes the proof of Theorem \ref{THM:main}.

\medskip

We conclude this section by stating the following corollary.

\begin{corollary}\label{COR:cons}
Assume the hypotheses
of Theorem \ref{THM:main}.
Let $u$ be  a global  solution to  \eqref{DNLS1a} with $u|_{t = 0} = \phi \in \F L^{s, p}_0$.
Then, 
$\Q_k = \Q_k[u]$, $k \in \Z_0$,  defined in \eqref{NF4} is invariant under the dynamics of  \eqref{DNLS1a}.

\end{corollary}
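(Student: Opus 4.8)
The plan is to show that $\Q_k[u]$, as defined in \eqref{NF4} for a solution $u$ to \eqref{DNLS1a} produced in Theorem \ref{THM:main}, is constant in $t$. The key point is that we have two independent descriptions of the solution at our disposal: the normal form equation \eqref{NF2} (equivalently \eqref{GWP3} on the Fourier side), valid for any solution $u \in C(\R; \F L^{s,p}_0)$ by the uniqueness argument in the proof of Theorem \ref{THM:main}, and the identification, established in Remark \ref{REM:linear}, of $\Q_k(t)$ with $2k \cdot \big(\dx S(-t) W(t)\big)_k / i$ whenever $W = \GG[u]$ solves the linear Schr\"odinger equation. Since $W$ satisfies $\dt W = i\dx^2 W$, the quantity $\big(S(-t)W(t)\big)_k = W_k(0)$ is conserved, and hence so is $\Q_k(t)$.

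First I would reduce to the smooth case. For smooth mean-zero data $\phi$ with $\|\phi\|_{\F L^{s,p}_0}$ small, Corollary \ref{COR:SGWP2} furnishes a smooth global solution $u$, and Proposition \ref{PROP:smooth} shows that its interaction representation $v$ satisfies \eqref{NF2}. In this regime the gauge transformation $W = \GG[u]$ from \eqref{CH3} solves \eqref{CH4}, and the computation \eqref{Tay1}--\eqref{Tay3} of Remark \ref{REM:linear} is fully justified: $\big(\dx S(-t)W(t)\big)_k = \tfrac i2 \Q_k(t)$ for $k \in \Z_0$. Conservation under the linear flow then gives $\Q_k(t) = 2k \cdot \tfrac{2}{i}\big(\dx S(-t)W(t)\big)_k = 2k\cdot\tfrac2i\big(\dx W(0)\big)_k = \Q_k(0)$ for all $t$, which is \eqref{NF5}.

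Next I would remove the smoothness assumption by the same limiting procedure used in the proof of Theorem \ref{THM:main}. Take $\phi \in \F L^{s,p}_0$ small and set $\phi^{(j)} = \P_{\le j}\phi$; by the convergence $u^{(j)} \to u$ in $C(\R; L^2)$ on compact time intervals (estimate \eqref{GWP2}), together with the uniform-in-$j$ bounds \eqref{smooth2}--\eqref{smooth2a} on $\N^n_k$ and $\RR^n_k$ coming from the small $L^2$-norm, the series $\sum_{n\ge1}\N^n_k(v^{(j)}(t))$ converges absolutely and uniformly in $n$, so one may pass to the limit termwise in the definition \eqref{NF4} of $\Q_k$. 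Thus $\Q_k[u^{(j)}](t) \to \Q_k[u](t)$ for each fixed $t$ and $k$, and since each $\Q_k[u^{(j)}]$ is constant in $t$ by the smooth case, so is $\Q_k[u]$. Finally, the general $(s,p)$ case reduces to $(s,p)=(0,2)$ via the embedding \eqref{embed}, $\F L^{s,p}_0 \subset L^2_0$, since $\Q_k$ depends on $u$ only through its $L^2$-data.

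The main obstacle is the justification of termwise passage to the limit in \eqref{NF4}: one must ensure not only that $u^{(j)}(t) \to u(t)$ in $L^2$ but that the double series $\sum_n \N^n_k(v^{(j)}(t))$ (and the time-integrated $\RR^n$ series) converge uniformly in $j$, so that the limit of the sums equals the sum of the limits. This is precisely what the bounds \eqref{smooth2} and \eqref{smooth2a}, with $C_0$ replaced by the uniform bound $\sup_j \sup_t \|u^{(j)}(t)\|_{L^2} < \dl_1$ from \eqref{GWP1}, are designed to give; the continuity of each $\N^n_k$ as a multilinear form on $L^2$ then transfers convergence of $v^{(j)}(t)$ to convergence of each term. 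Everything else is bookkeeping already carried out in Sections \ref{SEC:NF}--\ref{SEC:GWP}.
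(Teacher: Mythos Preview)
Your proposal is correct and follows essentially the same approximation strategy as the paper's (omitted) proof: verify conservation of $\Q_k$ for smooth solutions, then pass to the limit along $\phi^{(j)} = \P_{\le j}\phi$ using the uniform $L^2$-bound \eqref{GWP1} together with the multilinear Lipschitz estimates (the paper points to \eqref{NFWP4}). The only difference is cosmetic: for the smooth case you invoke Remark~\ref{REM:linear} and the linear evolution of $W=\GG[u]$, whereas the paper appeals to Remark~\ref{REM:cons} and Proposition~\ref{PROP:smooth}, i.e.\ the ODE \eqref{NF3} directly; the paper itself records in Remark~\ref{REM:linear} that these are two expressions of the same fact. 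One small slip: your display $\Q_k(t) = 2k\cdot\tfrac{2}{i}\big(\dx S(-t)W(t)\big)_k$ carries an extraneous factor $2k$ --- from \eqref{Tay3} it is simply $\Q_k(t) = \tfrac{2}{i}\big(\dx S(-t)W(t)\big)_k$ --- but this does not affect the argument.
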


\begin{proof}
In view of  Remark \ref{REM:cons}
and Proposition \ref{PROP:smooth}, 
this proposition follows from a standard approximation argument, 
using a computation similar to \eqref{NFWP4}, 
and thus we omit details.
\end{proof}

\appendix

\section{Large data finite time blowup solution}
\label{SEC:APP}

In this appendix, we present the proof of Theorem \ref{THM:2}.
Recall that in establishing global well-posedness of \eqref{DNLS1a}
through the gauge transformation \eqref{CH3}, 
 it was essential to guarantee that the gauged function $W$
 stays away from the origin.
This was achieved by imposing smallness assumption, 
since 
if $u(0) = \phi$ is small, 
the gauge transformation $W(0) = \GG_0[\phi]$
is a loop close to $1 \in \C$.
In the following, we construct an example
of a finite time blowup solution to \eqref{DNLS1a}
by first constructing an example
of $W(t)$ which approaches the origin
such that the inverse gauge transformation \eqref{CH5} ceases to make sense.
As $W(t) $ evolves linearly, 
it suffices to find a linear solution
that touches the origin in finite time.
By choosing
\[W(t, x):=1-ie^{-it}\cos x, \]

\noi
we see that $W\big(\frac\pi 2, 0\big) = 0$.
Indeed, 
 the loop $W(t)$ touches the origin for the first time at  $t=\frac \pi2$.

On the other hand, the inverse gauge transformation \eqref{CH5} gives a solution $u$ to \eqref{DNLS1a}:
\[u(x,t) = \frac{-2e^{-it}\sin x}{1-ie^{-it}\cos x}.\]

\noi
Note that $\int_\T u(0) dx = 0$.
We claim that the $L^1$-norm of $u(t)$ diverges
as $t \to \frac\pi 2-$.
The modulus of $u$ is
\[|u(t, x)| = \frac{2|\sin x|}{\sqrt{1 - 2 \sin t \cos x+\cos^2 x}}.\]

\noi
In particular, 
when  $t=\frac \pi 2$,
\[\big|u\big(\tfrac \pi 2, x\big)\big| = \frac{2|\sin x|}{1 - \cos x}.\]

\noi
For $|x| \ll1$, we have 
\[ 2|\sin x| \geq 2\bigg|x - \frac{x^3}{3!}\bigg| \geq |x|
\qquad \text{and} \qquad 
1 - \cos x \leq \frac{x^2}{2!}.\]

\noi
Hence
\[ \int_\T \big|u\big(\tfrac \pi 2, x\big)\big| dx
\geq \int_{|x|\ll 1} \frac{2}{|x|} dx = \infty.\]

\noi
Therefore, 
every $L^p$-norm of 
$u(t)$, $ 1 \leq p \leq \infty$,  diverges
as $t \to \frac \pi 2$.
Also, 
given $(s, p)$ as in Theorem \ref{THM:main}, 
the $\F L^{s, p}_0$-norm of 
$u(t)$ diverges
as $t \to \frac \pi 2$.

\begin{ackno}\rm
J.C.~was supported by NIMS grant funded by Korean Government (No.\,A23100000).
S.K.~was partially supported by NRF of Korea (No.\,2015R1D1A1A01058832) and Posco Science Fellowship.
T.O.~was supported by the European Research Council (grant no.~637995 ``ProbDynDispEq'').
The authors would like to thank Dario Bambusi for the reference \cite{Niko}.

\end{ackno}

%%%%%%%%%%%%%%%%%%%%%%%%%%%%%%%%%
%%%%%%%%%%%%%%%%%%%%%%%%%%%%%%%%%

%%%%%%%%%%%%%%%%%%%%%%%%%%%%%%%%%%%%
%%%%%%%%%%%%%%%%%%%%%%%%%%%%%%%%%%%%%

\end{document}